\def\ds{\displaystyle}
\def\nin{\not \in}
\def\n{\noindent}
\def\R{\mathbb{R}}
\def\Rpq{\mathbb{R}^{p,q}}
\def\eij{e_{ij}}
\def\v{\varphi}
\def\e{\varepsilon}
\def\s{\sigma}
\def\e{\epsilon}
\def\d{\delta}
\def\T{\mathcal{T}}
\def\l{\lambda}
\def\X{\mathcal{X}}
\def\V{\mathcal{V}}
\def\S{\mathcal{S}}
\def\E{\mathcal{E}}
\def\a{\alpha}
\def\b{\beta}
\def\Bx{\mathcal{B}_x}
\def\g{\gamma}
\newtheorem{theorem}{Theorem}
\newtheorem{lemma}[theorem]{Lemma}
\newtheorem{cor}[theorem]{Corollary}
\theoremstyle{definition}
\newtheorem{definition}[theorem]{Definition}
\newtheorem{xca}[theorem]{Exercise}
\newtheorem{remark}{Remark}
\theoremstyle{remark}
\numberwithin{equation}{section}
\begin{document}

\title{Simplicial isometric embeddings of indefinite metric polyhedra}

\author{Barry Minemyer}
\address{Department of Mathematics, The Ohio State University, Columbus, Ohio 43210}
\email{minemyer.1@osu.edu}


\subjclass[2010]{Primary 51F99, 52B11, 52B70, 57Q35, 57Q65; Secondary 52A38, 53B21, 53B30, 53C50}

\date{\today.}


\keywords{Differential geometry, Discrete geometry, indefinite metric polyhedra, metric geometry, polyhedral space}

\begin{abstract}

In this paper, isometric embedding results of Greene, Gromov and Rokhlin are extended to what are called ``indefinite metric polyhedra".  An {\it indefinite metric polyhedron} is a locally finite simplicial complex where each simplex is endowed with a quadratic form (which, in general, is not necessarily positive-definite, or even non-degenerate).  It is shown that every indefinite metric polyhedron (with the maximal degree of every vertex bounded above) admits a simplicial isometric embedding into Minkowski space of an appropriate signature.  A simple example is given to show that the dimension bounds in the compact case are sharp, and that the assumption on the upper bound of the degrees of vertices cannot be removed.  
These conditions can be removed though if one allows for isometric embeddings which are merely piecewise linear instead of simplicial.

\end{abstract}

\maketitle



\section{Introduction}\label{section 1}

Since before the famous works of Euler, Gauss and Riemann, mathematicians have been interested in the study of manifolds.  This led to the natural question of whether or not we could ``realize" manifolds in some appropriate sense.  Answering these question(s) are the celebrated embedding theorems of Whitney (\cite{Whitney1} and \cite{Whitney2}) and the isometric embedding theorems for Riemannian manifolds by Nash (\cite{Nash1} and \cite{Nash2}).  In 1970 Robert Greene \cite{Greene} and M. L. Gromov and V. A. Rokhlin \cite{Gromov1} independently proved that every manifold endowed with an indefinite metric tensor admits an isometric embedding into Minkowski space of an appropriate signature, thus extending Nash's results to the non-Riemannian case.  

While the concept of a metric space also goes back a long ways, the area of mathematics now referred to as ``metric geometry" has really only exploded over the past 60 some years.  A major theme in current research is to attempt to extend results from differential geometry to various types of metric spaces.  While it is a well known fact that every metric space with Lebesgue covering dimension $n$ admits an embedding into $\mathbb{R}^{2n+1}$, any attempt to generalize the above isometric embedding results of manifolds is going to be difficult, however, because they will depend greatly on the properties of that metric space.  For example, it is not true that even some (relatively) well-behaved metric spaces admit isometric embeddings into Euclidean space.  For example, as noted by Petrunin in \cite{Petrunin1} and Le Donne in \cite{Donne}, any Finsler manifold which is not Riemannian does not admit such an isometric embedding.  

There is still hope, however, of extending the results of Nash, Greene, Gromov and Rokhlin to various collections of spaces.  In \cite{Gromov2}, Gromov posed the question of whether or not Euclidean polyhedra admit piecewise linear (pl) isometric embeddings into Euclidean space, where a \emph{Euclidean polyhedron} is a metric space $\X$ which admits a locally finite triangulation $\T$ such that every $k$-dimensional simplex $S \in \T$ is affinely isometric to a $k$-dimensional simplex in $\mathbb{E}^k$ for all $k$.  Note that pl maps are the best that we can hope for here as, in general, we have no shot at finding simplicial isometric embeddings.  To see this simply consider the 1-skeleton of the standard 2-simplex (so, a triangle) where two of the edges have length 1 and the third edge has length 100.  It is easily seen that this space does not admit a simplicial isometric embedding into $\mathbb{E}^N$ for any $N$ but admits a pl isometric embedding into $\mathbb{E}^2$ (just subdivide the long edge).

Gromov's question above has, for the most part\footnote{That is, this problem has been solved up to either minimizing the dimensionality of the target Euclidean space or showing that the current bound is minimal.}, been answered in the affirmitive.  Some of the mathematicians associated with this are Zalgaller \cite{Zalgaller}, Krat \cite{Krat}, Akopyan \cite{Akopyan}, and the author \cite{Minemyer1}, \cite{Minemyer2}.  The present article extends the indefinite metric results of Greene, Gromov and Rokhlin to what are called \emph{indefinite metric polyhedra}.  

The first question is ``what should be the correct indefinite metric analogue to Euclidean polyhedra"?  In a Euclidean polyhedron, each simplex is affinely isometric to a simplex in Euclidean space.  This allows one to associate with each $k$-dimensional simplex a positive definite quadratic form on $\R^k$.  Furthermore, these forms ``agree" on the intersection of adjacent simplices, meaning that their restrictions yield the same length function on this intersection.  So the correct definition for an indefinite metric polyhedron should be the same as the above, except that the quadratic form associated with each simplex need not be positive definite (or even non-degenerate).  In section \ref{section 7} this definition is made precise and is shown to be equivalent to simply assigning real numbers to each edge of the underlying simplicial complex.  More specifically, an \emph{indefinite metric polyhedron} is defined to be a triple $(\X, \T, g)$ where $\X$ is a topological space, $\T$ is a triangulation of $\X$ with edge set $\E$, and $g: \E \rightarrow \R$ is simply a function.  This latter definition is much more convenient for proving \emph{simplicial} isometric embedding results, which will now be discussed.

What is most interesting in considering maps into Minkowski space is that one may return to the possibility of simplicial isometric embeddings.  For instance, the earlier example \emph{does} admit a simplicial isometric embedding into $\R^1_1$!  One such embedding is a map sending the vertex opposite the long edge to $(0,0)$ and sending the other two vertices to $(\pm 50, 7 \sqrt{51})$.  In general, \emph{every} compact indefinite metric polyhedron admits a simplicial isometric embedding into Minkowski space (of an appropriate signature), and for the non-compact case this is true assuming that the maximal degree of every vertex is bounded above\footnote{This condition is necessary, as we will see in section \ref{section 6}.}.

The following three theorems are proved in this paper.

\vskip 10pt

\begin{theorem}\label{main thm 1}

Let $(\X, \T, g)$ be a compact $n$-dimensional indefinite metric polyhedron with vertex set $\V$.  Let $d = \text{max} \{ \text{deg}(v) | v \in \V \} $ and let $q = \text{max} \{ d, \, 2n + 1 \}$.  Then there exists a simplicial isometric embedding of $\X$ into $\ds{\R^{q,q}}$. 

\end{theorem}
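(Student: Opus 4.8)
The plan is to build the embedding on vertices while exploiting the splitting $\R^q_q = \R^q \oplus \R^q$, on which the Minkowski form is $\langle (a,b),(a,b)\rangle = |a|^2 - |b|^2$ with $|\cdot|$ the Euclidean norm: prescribing the square-length of an edge then becomes a Euclidean condition on the first factor whose effective radius can be inflated at will using the second factor. The first step is to record the reduction that a simplicial map $\X \to \R^q_q$ is determined by a vertex map $\varphi \colon \V \to \R^q_q$, and that such a map is a simplicial isometric embedding precisely when (a) $\langle \varphi(v)-\varphi(w),\,\varphi(v)-\varphi(w)\rangle = g(\{v,w\})$ for every edge $\{v,w\}\in\E$, and (b) $\varphi$ is in general position, meaning that any $\le 2n+2$ of the points $\varphi(v)$ are affinely independent. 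Indeed, every pair of vertices of a simplex $S=[v_0,\dots,v_p]\in\T$ spans an edge, so by polarization (a) forces $\varphi|_S$ to pull the Minkowski form back to the quadratic form on $S$ with Gram matrix $\tfrac12\bigl(g(\{v_0,v_i\})+g(\{v_0,v_j\})-g(\{v_i,v_j\})\bigr)$, i.e. to the intrinsic form of $S$ in the polyhedron; while (b), together with $\dim\X=n$ and $2q\ge 2(2n+1)\ge 2n+2$, is the classical general-position criterion making the linear extension of $\varphi$ injective with the images of disjoint (resp.\ face-sharing) simplices meeting only as they must. It therefore suffices to produce $\varphi$ satisfying (a) and (b).

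To do this I would enumerate $\V=\{v_1,\dots,v_m\}$ and define $\varphi(v_1),\dots,\varphi(v_m)$ one at a time, writing $\varphi(v_i)=(a_i,b_i)\in\R^q\oplus\R^q$ and maintaining inductively that (a) holds among $v_1,\dots,v_i$, that any $\le q$ of $a_1,\dots,a_i$ are affinely independent, that any $\le q$ of $b_1,\dots,b_i$ are affinely independent, and that any $\le 2n+2$ of $\varphi(v_1),\dots,\varphi(v_i)$ are affinely independent. At step $i$ let $v_{j_1},\dots,v_{j_k}$ be the neighbors of $v_i$ among $v_1,\dots,v_{i-1}$; these account for all the edges at $v_i$ not handled at a later step, so $k\le\deg(v_i)\le d\le q$, and by the inductive hypotheses $a_{j_1},\dots,a_{j_k}$ and $b_{j_1},\dots,b_{j_k}$ are affinely independent. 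Since $\{b_{j_l}-b_{j_1}:2\le l\le k\}$ spans a subspace of dimension $k-1\le q-1$, I would choose $b_i$ to be a generic large vector in its orthogonal complement; then the numbers $\rho_l:=g(\{v_i,v_{j_l}\})+|b_i-b_{j_l}|^2$ are all large while their pairwise differences stay bounded, so the Euclidean spheres $\{a\in\R^q:|a-a_{j_l}|^2=\rho_l\}$ --- having affinely independent centers and nearly equal, enormous radii --- meet transversally in a round sphere $\Sigma$ of dimension $q-k\ge q-d\ge 0$ (a pair of points when $k=q$). Finally I would take $a_i\in\Sigma$ generic, so that (a) holds for the new edges.

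It remains to see that the generic choices can be made to preserve general position. The new edge equations cut out a solution set $V_i\subseteq\R^{2q}$ for $\varphi(v_i)$ of dimension $2q-k\ge q\ge 2n+1$, so $V_i$ lies in no affine subspace of $\R^{2q}$ of dimension $\le 2n$; moreover, since the $b_{j_l}$ are affinely independent, a short dimension count (fibering $V_i$ over its projection to the first factor, whose fibers are intersections of $k$ Euclidean spheres with affinely independent centers) shows that this projection is $q$-dimensional and hence lies in no affine subspace of $\R^q$ of dimension $\le q-2$. Consequently a generic $\varphi(v_i)\in V_i$ avoids the finitely many affine subspaces forbidden by the general-position conditions, and the induction proceeds to step $m$; the resulting $\varphi$ satisfies (a) and (b), and by the reduction it induces a simplicial isometric embedding $\X\hookrightarrow\R^q_q$.

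I expect the last, simultaneous bookkeeping to be the main obstacle: one must keep the edge equations solvable at every \emph{future} vertex --- which is exactly why the first factor needs dimension $\ge d$, so that up to $d$ spheres with affinely independent centers can be intersected, and why the centers (and the auxiliary points $b_i$) must be kept generic --- while at the same time steering each heavily constrained choice into general position for the eventual linear embedding, which is what forces the ambient dimension up to $2q\ge 2(2n+1)$. The tightest case is a vertex of degree exactly $q$: there $\Sigma$ degenerates to a pair of points, and one has to check directly that the finitely many forbidden affine subspaces --- necessarily of small dimension because the earlier vertices are generic --- contain neither candidate. By contrast, the inflation of the radii $\rho_l$ through the negative directions and the transversality of the resulting sphere intersection are routine once the orthogonality trick for choosing $b_i$ is in place.
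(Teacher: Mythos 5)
Your argument is correct in substance but proceeds by a genuinely different route from the paper. The paper's proof is soft and global: it introduces the squared-induced-metric map $\v:\text{Simp}(\X,\R^N)\rightarrow\text{Met}(\X)$, shows (Lemma \ref{lemma surjective differential}) that $d\v$ is surjective at any ``free'' map --- one whose vertex images are in $d$-general position, which is exactly where $q\geq d$ enters, via linear independence of the edge vectors at each vertex --- then takes a free embedding $f$ of $\X$ into $\mathbb{E}^q$, observes that $f\oplus f:\X\rightarrow\R^q_q$ induces the zero metric, applies the inverse function theorem to realize every metric in a neighborhood of $\vec{0}$, and finally uses the homogeneity $\v(\l h)=\l^2\v(h)$ (Greene's trick) to rescale the prescribed $g^2$ into that neighborhood; compactness is what makes $\text{Simp}$ and $\text{Met}$ finite-dimensional and yields a single scaling constant $\l$. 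You instead place the vertices greedily one at a time, solving the edge equations exactly at each step by using the negative factor of $\R^q_q=\R^q\oplus\R^q$ to inflate the radii and then intersecting at most $d$ Euclidean spheres with affinely independent centers in the positive factor --- so $q\geq d$ enters for you as the transversality of that sphere intersection, and $2q\geq 2(2n+1)$ as room for the general-position/embedding criterion (the paper's Lemma \ref{lemma:important}). Your approach buys constructivity (the paper's $\l$ and inverse-function-theorem neighborhood are entirely existential) at the price of the simultaneous genericity bookkeeping you flag yourself: the one step that genuinely needs care is arguing on the full solution quadric $V_i$ rather than on your specific parametrization, since when $k=q$ the ``orthogonal complement'' recipe for $b_i$ collapses to a single line which could a priori lie inside a forbidden affine hull of earlier $b$'s; your third paragraph's dimension count on $V_i$ and its two projections is the right fix, though you should also note that the relevant $(2q-k)$-dimensional piece of $V_i$ meets each forbidden affine subspace (and each preimage of one under the two projections) in a measure-zero subset before invoking genericity. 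Two cosmetic points: the edge condition should read $g^2(\{v,w\})=s(g(\{v,w\}))$ in the paper's notation, and the polarization identity you derive is exactly the paper's Gram-matrix computation from Section \ref{section 7}, not something needed for the definition of simplicial isometry used here.
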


\vskip 10pt

\begin{theorem}\label{main thm 2}

Let $(\X, \T, g)$ be a compact $n$-dimensional indefinite metric polyhedron with edge set $\E$.  Then $\X$ admits a simplicial isometric embedding into $\R^{p,q}$ for some integers $p$ and $q$ which satisfy $p \geq 2n + 1$ and\footnote{Actually, we can divide these $2n + 1$ coordinates between $p$ and $q$ in any way that we wish.} $p + q = 2n + 1 + |\E|$.

\end{theorem}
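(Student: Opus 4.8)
The plan is to build the embedding coordinate-by-coordinate, splitting the $2n+1+|\E|$ target coordinates into two groups with two very different jobs. First I would dispose of the topological part: since $\X$ is a compact $n$-dimensional polyhedron, it admits a simplicial embedding into $\R^{2n+1}$ (this is the classical result mentioned in the introduction), and in fact we may take this embedding to be a \emph{general position} linear embedding on each simplex, so that the images of the vertices are affinely independent in a strong sense. Call this map $\iota \colon \X \to \R^{2n+1}$. This map is an embedding but of course has nothing to do with the metric $g$; its only role is to guarantee injectivity, and we will keep these $2n+1$ coordinates in the final map (placed among the $p$'s or $q$'s however we like, as the footnote indicates — a spacelike direction contributes $+|\cdot|^2$, and since $\iota$ is injective this can only help).

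The heart of the argument is the remaining $|\E|$ coordinates, one for each edge $e \in \E$. The idea is that these coordinates will be used to \emph{correct} the induced length on each edge to the prescribed value $g(e)$, and — crucially — each such coordinate only interacts with one edge. Fix an enumeration $e_1, \dots, e_{|\E|}$ of $\E$. For the $j$-th extra coordinate, I would define a simplicial (i.e. affine on each simplex) function $f_j \colon \X \to \R$ that is supported near $e_j$: specifically, set $f_j$ to be nonzero on exactly one of the two endpoints of $e_j$ and zero on every other vertex of $\T$, then extend affinely over each simplex. Then the contribution of the $f_j$-coordinate to the squared length of an edge $e$ is $(f_j(a) - f_j(b))^2$ where $a,b$ are the endpoints of $e$; by construction this is $0$ unless $e = e_j$, in which case it equals $f_j(v)^2$ for the chosen endpoint $v$ of $e_j$. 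So along edge $e_j$, the total induced squared length from the combined map is
\[
\ell_j^{\,2} \;=\; \big(\text{squared length of } \iota(e_j)\big) \;\pm\; f_j(v)^2,
\]
the sign being $+$ if we put coordinate $j$ in a spacelike slot and $-$ if timelike. Now one simply chooses, for each $j$, the sign and the value $f_j(v)$ so that $\ell_j^{\,2} = g(e_j)$: if $g(e_j)$ exceeds the $\iota$-length squared, use a spacelike slot; if it is smaller (including negative), use a timelike slot and take $f_j(v)^2$ large enough. This is always solvable, and it fixes $p$ and $q$: we need $q$ to absorb the timelike corrections, while $p \geq 2n+1$ because the $\iota$-block may be placed spacelike; in all cases $p + q = 2n+1+|\E|$.

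Finally I would check that the resulting map $F = (\iota, \, \varepsilon_1 f_1, \dots, \varepsilon_{|\E|} f_{|\E|})$ (with $\varepsilon_j = \pm 1$ recording the slot) is a simplicial isometric embedding: it is simplicial because each component is affine on each simplex; it is an embedding because its first $2n+1$ components already form the embedding $\iota$; and it is isometric because, by the indefinite-metric-polyhedron structure established in the (forward-referenced) section, the length structure of $(\X,\T,g)$ is \emph{determined by the edge lengths} $g(e)$ together with the affine structure on each simplex, so matching $F$ on all edges forces the induced quadratic form on every simplex to agree with $g$. I expect the main obstacle to be precisely this last point — verifying that correcting lengths on edges one at a time does not spoil the length structure on higher-dimensional faces, i.e. that the pairwise "edge corrections" $f_j$ assemble into the correct quadratic form on each simplex simultaneously. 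This comes down to the fact that on a single $k$-simplex the quadratic form pulled back by an affine map is exactly the Gram-type form built from the images of the edge vectors, so it is enough to get every edge right; the non-interaction property of the $f_j$'s (each supported on one vertex of one edge) is what makes the corrections independent and hence simultaneously achievable.
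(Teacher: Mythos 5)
The decisive step in your proposal fails: a simplicial function $f_j\colon \X \to \R$ that is nonzero on a single vertex $v$ and zero on all other vertices does \emph{not} contribute only to the edge $e_j$. Its contribution to the squared length of an edge $e$ with endpoints $a,b$ is $(f_j(a)-f_j(b))^2$, and this equals $f_j(v)^2 \neq 0$ for \emph{every} edge incident with $v$ --- there are $\deg(v)$ of these, not one. More generally, $\v(f_j)$ can be supported on a single edge $e_{ij}$ only if $f_j$ is constant on each connected component of the $1$-skeleton with $e_{ij}$ deleted while taking different values at $v_i$ and $v_j$, which is possible only when $e_{ij}$ is a cut edge; this already fails for a single triangle. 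So the ``non-interaction property'' on which your entire correction scheme rests is false, and the $|\E|$ corrections cannot be prescribed independently, one value per coordinate.

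Repairing this leads essentially to the paper's actual argument. Since each correction coordinate unavoidably touches several edges at once, one must instead choose simplicial functions $h_1,\dots,h_{|\E|}$ whose induced squared metrics $\v(h_1),\dots,\v(h_{|\E|})$ form a basis of $\text{Met}(\X)\cong\R^{|\E|}$ (a map with a ``spanning metric''), write $g^2-\v(f)=\sum_k\a_k\v(h_k)$ for the embedding $f$ into $\R^{2n+1}$, and concatenate $f$ with the maps $\sqrt{|\a_k|}\,h_k$, placing each in a spacelike or timelike slot according to the sign of $\a_k$; the identities $\v(\a\oplus\b)=\v(\a)+\v(\b)$ and $\v(\l h)=\l^2\v(h)$ then give the required induced metric. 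The nontrivial content your proposal omits is the existence of such a spanning family: the vectors $\v(h)$ range only over the image of a quadratic map, not over all of $\text{Met}(\X)$, and the paper proves in Lemma \ref{lemma 4.1} and Corollary \ref{corollary 4.1} that one can nonetheless build the $h_k$ recursively, using freeness of generic simplicial maps and the inverse function theorem to escape any proper subspace of $\text{Met}(\X)$. Your closing observation --- that matching the squared lengths of all edges forces the correct quadratic form on every simplex --- is correct and is not where the difficulty lies.
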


\vskip 10pt

\begin{theorem}\label{main thm 3}

Let $(\X, \T, g)$ be an $n$-dimensional indefinite metric polyhedron with vertex set $\V$ and suppose that $d = \text{max} \{ \text{deg}(v) | v \in \V \} < \infty$. Let $\ds{q = \text{max} \{ d, \, 2n + 1 \} }$.  Then there exists a simplicial isometric embedding of $\X$ into $\ds{\R^{p,p}}$ where $p = 2q(d^3 - d^2 + d + 1)$.

\end{theorem}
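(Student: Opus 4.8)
The plan is to leverage the compact case, Theorem~\ref{main thm 1}, by means of a finite colouring of the $1$-skeleton, spending a separate block of coordinates on each colour. Write $G$ for the $1$-skeleton of $\T$, regarded as a graph on $\V$ with graph metric $d_G$, and for $v\in\V$ let $\overline{\mathrm{St}}(v)$ be the closed star of $v$ in $\T$. Since $\deg_G(v)\le d$ for all $v$, the number of vertices at $d_G$-distance at most $3$ from a fixed vertex is at most $d+d(d-1)+d(d-1)^2=d^3-d^2+d$. Hence the graph $G'$ on $\V$ with $u\sim v$ whenever $1\le d_G(u,v)\le 3$ has maximum degree at most $d^3-d^2+d$, and, being of bounded degree, admits a proper colouring $\chi\colon\V\to\{1,\dots,N\}$ with $N:=d^3-d^2+d+1$ colours. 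The property of $\chi$ we use is that $\chi(u)=\chi(v)$ with $u\ne v$ forces $d_G(u,v)\ge 4$.

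For a colour $c$, set $\V_c:=\chi^{-1}(c)$ and $W_c:=\bigcup_{v\in\V_c}\overline{\mathrm{St}}(v)$. Three facts follow. (i) The stars $\overline{\mathrm{St}}(v)$ for $v\in\V_c$ are pairwise disjoint subcomplexes — a common vertex would lie within $d_G$-distance $1$ of two distinct colour-$c$ vertices, forcing them to be at distance $\le 2$ — and no edge of $\T$ joins two of them, since such an edge would force two colour-$c$ vertices to be at distance $\le 3$; thus $W_c$ is a disjoint union of compact pieces. (ii) Each $\overline{\mathrm{St}}(v)$, with the restriction of $g$, is a compact indefinite metric polyhedron whose complex has at most $d+1$ vertices, dimension at most $n$, and maximum vertex degree at most $d$; so Theorem~\ref{main thm 1} applies to it and embeds it isometrically in $\R^q_q$. (iii) For an edge $e=\{x,y\}$ of $\T$, the vertices $v$ with $e\subseteq\overline{\mathrm{St}}(v)$ are exactly $x$, $y$, and the $z$ with $\{x,y,z\}$ a simplex of $\T$; all of these are within $d_G$-distance $1$ of $x$, hence within $d_G$-distance $2$ of each other, hence carry pairwise distinct colours. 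So each edge lies in $\overline{\mathrm{St}}(v)$ for at most one vertex $v$ of each colour, and (take $v=x$) for at least one colour.

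Now write $\R^p_p$ with $p=2q(d^3-d^2+d+1)=2qN$ as an orthogonal sum of $N$ blocks $B_1,\dots,B_N$, each a copy of $\R^{2q}_{2q}\cong\R^q_q\oplus\R^q_q$, one per colour; since $q\ge 2n+1$ the total dimension leaves room for a fixed simplicial embedding $\psi\colon\X\hookrightarrow\R^{2n+1}$ (into $2n+1$ spacelike coordinates, say), which we never disturb and which makes every map below injective and non-degenerate on simplices. Build the embedding in stages $c=1,\dots,N$. At stage $c$, modify the map only over $W_c$ and only in the $B_c$-coordinates, arranging that every edge of $\T$ lying in some colour-$c$ star acquires its prescribed $\R^p_p$-length $g(e)$. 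As the colour-$c$ stars are disjoint, this is done star by star: on $\overline{\mathrm{St}}(v)$, $v\in\V_c$, choose $B_c$-images of the vertices so that each edge $e$ of $\overline{\mathrm{St}}(v)$ attains the residual value $\tilde g_v(e):=g(e)-(\text{length of }e\text{ so far})$, while any edge of $\overline{\mathrm{St}}(v)$ that reaches a vertex outside $\overline{\mathrm{St}}(v)$ and has already been corrected is left untouched. Because that outside endpoint has zero $B_c$-component, this last requirement says precisely that the $B_c$-image of the corresponding vertex of $\overline{\mathrm{St}}(v)$ is a null vector of $B_c$. One solves this by using the first $\R^q_q$ summand of $B_c$ to realize $(\overline{\mathrm{St}}(v),\tilde g_v)$ via Theorem~\ref{main thm 1} (legitimate by (ii)) and the second $\R^q_q$ summand to slide the finitely many constrained vertices onto the null cone — a vertex whose $B_c$-square so far is $s$ is made null by adjoining a vector of Minkowski square $-s$ in the second summand — reconciling the two summands on the edges among constrained vertices. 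Consistency across stages holds: by (iii), an edge touched before stage $c$ was already set to $g(e)$ and is touched at most once per colour, so $\tilde g_v$ vanishes on every already-corrected edge of $\overline{\mathrm{St}}(v)$, which is exactly the compatibility the null-cone step needs; and by (i) no edge joins two colour-$c$ stars, so the stage-$c$ adjustment of one star cannot damage another. After stage $N$ every edge of $\T$ has length $g(e)$, yielding a simplicial isometric embedding $\X\hookrightarrow\R^p_p$.

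The one nonformal point is the per-star constrained realization: realizing a prescribed residual edge function on a compact star while forcing a prescribed subcomplex onto the null cone, inside $\R^{2q}_{2q}$. Theorem~\ref{main thm 1} gives the realization in one $\R^q_q$ summand and the null-cone slide in the second summand is elementary; what needs care is their interaction on edges between constrained vertices, and this second summand is exactly what the factor of two in $p=2qN$ buys. Everything else — the colouring from the degree bound, the disjointness and no-edge properties of colour classes, the fact that closed stars are genuine compact indefinite metric polyhedra, and the injectivity and non-degeneracy of the global map via $\psi$ — is routine.
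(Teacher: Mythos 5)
Your overall architecture is the same as the paper's: a proper colouring of the distance-$\le 3$ graph on $\V$ with $d^3-d^2+d+1$ colours, a compact realization on each closed star via Theorem~\ref{main thm 1}, and a block-by-block concatenation into $\R^p_p$. The counting in your steps (i)--(iii) is correct and matches the paper. The gap is exactly at the point you flag as ``nonformal'': the per-star constrained realization. Your proposal is to realize the residual metric $\tilde g_v$ on $\overline{\mathrm{St}}(v)$ in the first $\R^q_q$ summand of $B_c$ and then ``slide'' the constrained boundary vertices onto the null cone by adjoining, in the second $\R^q_q$ summand, a vector of Minkowski square $-s_u$ to each constrained vertex $u$ (and $\vec 0$ to the others). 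But this adjunction changes the squared length of \emph{every} edge incident to a constrained vertex: an edge from a constrained $u$ to an unconstrained $u'$ picks up $-s_u$, and an edge between two constrained vertices picks up $\langle y^{(2)}_u - y^{(2)}_{u'}, y^{(2)}_u - y^{(2)}_{u'}\rangle$, neither of which is zero in general. You mention ``reconciling the two summands on the edges among constrained vertices,'' but you omit the constrained--unconstrained edges entirely, and any attempt to pre-compensate in the first summand is circular, since the required corrections depend on the values $s_u$, which depend on the first-summand realization. As written, the step that the factor of $2$ in $p=2qN$ is supposed to buy does not go through.

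The missing idea, which is how the paper resolves this, is to build the null constraints into a single \emph{unconstrained} application of Theorem~\ref{main thm 1} on an augmented complex: form $\S_v$ by adjoining an apex $v^*$ joined (by simplices of the appropriate dimensions) to exactly those boundary vertices of $St(v)$ having an edge leaving $St(v)$, and assign the metric $\g_v(e)=\tfrac{1}{\sqrt 2}\,g(e)$ on edges at $v$ and $\g_v(e)=0$ on all other edges of $\S_v$. Realizing $(\S_v,\g_v)$ in $\R^q_q$ and translating $v^*$ to $\vec 0$ automatically places every relevant boundary vertex on the null cone (its edge to $v^*$ has energy $0$), with no after-the-fact slide and no reconciliation. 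The symmetric choice of $\tfrac12 g^2$ on central edges also replaces your sequential residual bookkeeping: each edge receives exactly two nonzero contributions of $\tfrac12 g^2(e)$, one from the colour class of each endpoint, and $0$ from every other class. In the paper the second $\R^q_q$ summand of each block is then spent not on a null-cone correction but on the linear isometries $\iota_v$ that separate the images of distinct same-colour stars so that the concatenation is a global embedding; your alternative of reserving $2n+1$ spacelike coordinates for a fixed embedding $\psi$ could in principle serve the same purpose, but as budgeted ($p=2qN$ with each block a full $\R^{2q}_{2q}$) there is no room for those coordinates to go undisturbed, so that accounting would also need to be redone.
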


\vskip 10pt

Notice that Theorems \ref{main thm 1} and \ref{main thm 2} are essentially the same but, due to the existence of the $|\E|$ term in Theorem \ref{main thm 2}, the dimension requirements in Theorem \ref{main thm 1} will generally be much smaller.  The reason Theorem \ref{main thm 2} is included is because the proof is somewhat constructive while the proof of Theorem \ref{main thm 1} is completely existential.  It is certainly possible that the method of proof of Theorem \ref{main thm 2} will end up as the most important result in this paper from an applied viewpoint.  

An outline of the paper is as follows.  In section \ref{section 2} a few preliminary definitions and facts will be discussed.  Theorem \ref{main thm 1} will be proved in section \ref{section 3} and Theorem \ref{main thm 2} in section \ref{section 4}.  The proof of Theorem \ref{main thm 3} uses Theorem \ref{main thm 1} and will be completed in section \ref{section 5}.  Then in section \ref{section 7} it is shown that the two definitions of an indefinite metric polyhedron coincide.  Finally, in section \ref{section 6} an example is presented which shows that both the dimension requirements in Theorem \ref{main thm 1} are sharp and that the assumption ``$d < \infty$" in Theorem \ref{main thm 3} is necessary.  

For most\footnote{The dimension requirements for the $C^1$ Nash-Kuiper results are certainly sharp.} isometric embedding results it is not known whether or not the dimension(s) of the target space are optimal, which is one very nice feature of Theorem \ref{main thm 1}.  Note, though, that there is certainly no mention of whether or not the dimension bounds for Theorem \ref{main thm 3} are sharp as it is believed that they are not\footnote{It is now known that they are not.  See Remark \ref{remark: new paper} below.}.  

One final remark about the above results.
The amount of necessary codimension in the above three theorems is quite high, and in that way these simplicial isometric embeddings resemble the rigidity of $C^k$-isometric embeddings ($k \geq 2$) of Riemannian manifolds into Euclidean space.
One may wonder if this rigidity necessity is preserved in the more general setting of {\it piecewise linear} isometric embeddings.
But the following Theorem, which is a specific case of a more general result proved in \cite{Minemyer3}, shows that this is not the case.

\vskip 10pt

\begin{theorem}\label{main thm 4}

Let $(\X, \T, g)$ be an n-dimensional indefinite metric polyhedron (with a locally finite triangulation $\T$).  Then $\X$ admits a piecewise linear isometric embedding into $\R^{2n,n}$.

\end{theorem}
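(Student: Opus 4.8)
The plan is to exploit the enormous flexibility of \emph{pl} maps by subdividing simplices, together with the fact that in Minkowski space we may freely ``spend'' length in the positive directions and ``absorb'' it in the negative directions. First I would reduce to the local problem: since $\T$ is locally finite, a pl isometric embedding of $\X$ can be assembled from pl isometric embeddings defined on the (open) stars of vertices, provided we are careful about consistency on overlaps; alternatively, one builds the map one simplex at a time over a locally finite ordering of $\T$, at each stage extending a pl isometric map already defined on a subcomplex. The key reduction is therefore to the following statement: given a single $n$-simplex $S$ with a prescribed edge-length function $g$ (arbitrary reals) agreeing with a pl isometric map already specified on $\partial S$, one can extend to a pl isometric map of $S$ into $\R^n_{2n}$ (resp.\ $\R^{2n}_{n}$) without increasing the signature.

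Next I would handle that one-simplex extension by \emph{coning with subdivision}. Pick a point $p$ in the interior of $S$ and subdivide $S$ into the cones from $p$ over the (already subdivided) simplices of $\partial S$. The map is already defined on $\partial S$; we must choose the image of $p$ and of all the interior subdivision vertices, and then extend affinely on each small simplex. The constraint is that on each small simplex the induced quadratic form must equal the prescribed one. The crucial observation — exactly as in the Euclidean pl results of Krat and Akopyan, but now with indefinite targets — is that the ``defect'' between the metric we want and the metric we currently have from the boundary data is, after enough subdivision, small and can be corrected coordinate by coordinate: each new coordinate function is a pl ``zig-zag'' (a sawtooth) supported near $p$, and a zig-zag contributes a \emph{positive} definite rank-one form in a positive coordinate and a \emph{negative} one in a negative coordinate. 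Since we need to realize an arbitrary symmetric form on an $n$-dimensional simplex, and such a form decomposes as a sum of at most $n$ positive rank-one forms and at most $2n$ negative rank-one forms is not quite the right count — rather, writing the target form relative to the boundary-induced form, we see we need at most $n$ coordinates of one sign to add positive curvature-type contributions and at most $2n$ of the other to absorb, giving the asymmetric signatures $(n, 2n)$ and $(2n, n)$. The asymmetry between $n$ and $2n$ will come from the fact that a degenerate form on $\R^n$ can fail to be ``dominated'' by a positive form in only $n$ directions but the correction sawtooths interfere pairwise, forcing the doubling in the other index; I would track this bookkeeping carefully since it is where the precise numbers $\R^n_{2n}$ and $\R^{2n}_n$ are pinned down.

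The main obstacle — and the step I expect to require the most care — is the \emph{global consistency} of the subdivisions and of the sign-conventions across adjacent simplices. When I subdivide a simplex $S$ to correct its metric, the induced subdivision of each face of $S$ must be compatible with the subdivision chosen when correcting the neighboring simplex across that face; and the sawtooth coordinates introduced for $S$ must restrict correctly (typically to zero, or to the already-fixed boundary values) on $\partial S$. The standard device is to fix, once and for all, a sufficiently fine subdivision of all of $\T$ first (using local finiteness to do this coherently), then to perform all corrections within that fixed combinatorial framework, choosing the sawtooth amplitudes small enough that the required decomposition of the metric defect into rank-one pieces is available on every simplex simultaneously; one must also verify that finitely many coordinate-directions of each sign suffice \emph{uniformly}, which is where ``$n$-dimensional'' (bounding the local complexity of the forms) rather than any hypothesis on vertex degree is exactly what is needed, in contrast to Theorems \ref{main thm 1}--\ref{main thm 3}. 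Finally I would check that the resulting pl map is not merely a pl isometric \emph{immersion} but an \emph{embedding}: injectivity is arranged, as usual, by reserving two extra coordinates (one of each sign, which is why having $n \ge 1$ coordinates of each type to spare matters) to carry a generic pl embedding of $\X$ into $\R^{2n+1}$ and adding it with a small enough weight that it does not disturb the isometry condition — here using that adding a positive rank-one piece in a positive coordinate and the matching negative piece in a negative coordinate changes the induced form by zero.
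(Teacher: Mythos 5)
There is a genuine gap at the central step of your proposal: you never actually establish that the ``metric defect'' on a simplex --- which in the indefinite setting is an \emph{arbitrary} symmetric bilinear form, not a positive one --- can be realized by sawtooth corrections in $n$ positive and $2n$ negative coordinates. You flag this yourself (``is not quite the right count \dots I would track this bookkeeping carefully''), but this bookkeeping is precisely where the theorem lives, and the Euclidean pl Nash--Kuiper arguments of Krat and Akopyan that you invoke as a model rely essentially on the defect being positive definite (shortness): a zig-zag in a single coordinate only ever \emph{adds} a positive rank-one form, and controlling the cross-terms between many interfering zig-zags of both signs on a common subdivision is not a routine extension. Your injectivity step also does not fit in the stated dimensions: a generic pl embedding of an $n$-complex needs $2n+1$ coordinates, so ``reserving one extra coordinate of each sign'' cannot carry it, and $\R^n_{2n}$ has no spare room once $n$ positive and $2n$ negative directions are committed to the metric corrections.

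The paper avoids all of this with Greene's decomposition trick, which is the missing idea. One first chooses a \emph{simplicial} local embedding $h:\X\to\mathbb{E}^{2n}$ (the $2n$ is forced by general position of the vertices of an $n$-complex, not by any interference of sawtooths) and scales it so that $G+G_h$ is positive definite on every simplex; thus $(\X,\T,G+G_h)$ is a Euclidean polyhedron. One then takes a short map $f:\X\to\mathbb{E}^n$ for that Euclidean metric and applies Akopyan's pl isometric embedding theorem --- used as a black box, with shell-by-shell accuracies $\e_k$ --- to produce a pl map $f'$ with $G_{f'}=G+G_h$ exactly. The concatenation $F'=f'\oplus h:\X\to\R^n_{2n}$ then induces $G_{f'}-G_h=G$, so only the \emph{Euclidean} pl theorem is ever needed; the negative part is handled exactly by the fixed simplicial map $h$, with no subdivision-compatibility issues. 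Injectivity comes for free from the same coordinates: $F'$ is a controlled approximation of the embedding $f\oplus h$, with $h$ injective on small neighborhoods, so no extra coordinates are reserved. Reversing the roles of the signs gives the $\R^{2n}_n$ case. Without this reduction to the positive-definite case, your simplex-by-simplex sawtooth construction would have to prove an indefinite pl Nash--Kuiper theorem from scratch, and as written it does not.
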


\vskip 10pt

So by Theorem \ref{main thm 4} and the example in section \ref{section 6} it is seen that the dimension requirements for simplicial isometric embeddings depend on the maximal degree $d$ of any vertex of $\T$, but when the setting is passed to pl isometric embeddings this requirement is removed! 
In particular, the collection of piecewise linear isometric embeddings into Minkowski space seem to satisfy some sort of discrete {\it $h$-principle}, while their simplicial counterparts do not.

\begin{remark}\label{remark: new paper}
The main results of this paper were originally posted on arXiv in November of 2012, and were first submitted for publication during the summer of 2013.  
They are also contained in the author's Ph. D. Thesis \cite{Minemyer2}.
In January of 2015, a paper was posted on arXiv by Pavel Galashin and Vladimir Zolotov \cite{GZ} which extends some of these results.  
Specifically, they show that the dimension requirements of Theorem \ref{main thm 3} can be reduced to the same as those of Theorem \ref{main thm 1}.  
Their method is also pretty algorithmic, so in that sense it is an improvement on Theorem \ref{main thm 2} and Section \ref{section 4} below.
\end{remark}

\subsection*{Acknowledgements} The motivation for many of the ideas in this paper comes from Nash \cite{Nash2} and Greene \cite{Greene}.  The author also wants to thank P. Ontaneda and many others for helpful remarks and guidance during the writing of this article.  This research was partially supported by the NSF grant of Tom Farrell and Pedro Ontaneda, DMS-1103335.


\section{Preliminaries}\label{section 2}

\subsection{General position}

A set of $k$ points in $\R^N$ (with $k \leq N + 1$) is said to be \emph{affinely independent} if the entire set of points is not contained in any $(k - 2)$-dimensional affine subspace of $\R^N$.  A set of points $\mathscr{A}$ in $\mathbb{R}^N$ is said to be in \emph{general position} if every subset of $\mathscr{A}$ containing $N + 1$ or fewer points is affinely independent.  Suppose $n$ and $N$ are integers with $n \leq N$.  A set of points $\mathscr{B}$ in $\mathbb{R}^N$ is said to be in \emph{$n$-general position} if every subset of $\mathscr{B}$ containing $n + 1$ or fewer points is affinely independent.

For this paper, a \emph{polyhedron} is a tuple $(\X, \T)$ where $\X$ is a topological space and $\T$ is a locally finite simplicial triangulation of $\X$.  \emph{Simp($\X, \R^N$)} denotes the collection of all simplicial maps from $\X$ into $\R^N$ (with respect to $\T$) and \emph{Met($\X$)} denotes the collection of all indefinite metrics on $\X$ as defined in section \ref{section 1}.

Notice that if $(\X, \T)$ is a compact polyhedron and if we fix an ordering on the vertex set $\V$ and the edge set $\E$ of $\T$ that we have a bijective correspondence between Simp($\X, \R^N$) $\cong$ $\R^{N|\V|}$ and Met($\X$) $\cong$ $\R^{|\E|}$.  This allows us to consider both Simp($\X, \R^N$) and Met($\X$) as topological vector spaces.  And this remark does not change if we replace the Euclidean inner product on $\R^N$ with any Minkowski inner product (see subsection 2.2).  

The following well known Lemma is proved in \cite{HY}:

\begin{lemma}\label{lemma:important}

Let $(\X, \T)$ be an $n$-dimensional polyhedron, let $f$ $\in$ $\text{Simp}(\X, \mathbb{R}^N)$, and let $\V$ be the vertex set of $\X$.  Let $f(\V)$ denote the collection of the images of the vertices of $\T$.  If $f(\V)$ is in ($2n + 1$)-general position (so in particular we must have $N$ $\geq$ $2n + 1$), then $f$ is an embedding.

\end{lemma}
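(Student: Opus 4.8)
The plan is to argue the contrapositive in the usual way: a simplicial map $f$ fails to be an embedding precisely when either it identifies two vertices, or two disjoint closed simplices of $\T$ have intersecting images, or a single simplex is collapsed (its vertices sent to an affinely dependent set). All three failure modes will be ruled out by the hypothesis that $f(\V)$ is in $(2n+1)$-general position. First I would observe that since $\dim \X = n$, any simplex of $\T$ has at most $n+1$ vertices, and the union of any two simplices (adjacent or not) involves at most $2n+2$ vertices — but, crucially, if two simplices are \emph{disjoint} as subsets of $\X$ they share no vertices, so together they have at most $2(n+1) = 2n+2$ vertices; to apply general position I actually want to look at configurations of at most $2n+1$ points, so I would instead set things up to avoid needing all $2n+2$ at once, which is the standard trick (see below).

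The key steps, in order, are as follows. (1) \emph{Injectivity on vertices.} If $v \neq w$ are vertices with $f(v) = f(w)$, then $\{f(v), f(w)\}$ is an affinely dependent set of $2 \leq 2n+1$ points, contradicting $(2n+1)$-general position. (2) \emph{Each simplex is embedded.} If $S$ is a simplex of $\T$ with vertices $v_0, \dots, v_k$, $k \leq n$, then $f$ maps $S$ affinely onto the convex hull of $f(v_0), \dots, f(v_k)$; since these are $k+1 \leq n+1 \leq 2n+1$ points in general position they are affinely independent, so $f|_S$ is an affine isomorphism onto a genuine $k$-simplex, hence injective. (3) \emph{Disjoint simplices have disjoint images.} Suppose $S, S'$ are simplices of $\T$ with $S \cap S' = \emptyset$ but $f(S) \cap f(S') \neq \emptyset$. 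Pick a point $p \in f(S)\cap f(S')$. Then $p$ lies in the convex hull of the vertex-images of $S$ and also in the convex hull of the vertex-images of $S'$; choosing minimal faces, we may assume $p$ lies in the (relative) interior of $f(\sigma)$ and of $f(\sigma')$ for faces $\sigma \leq S$, $\sigma' \leq S'$, still disjoint. The vertices of $\sigma$ together with those of $\sigma'$ form a set of at most $(n+1)+(n+1) = 2n+2$ points — but since $p$ is in the relative interior of both images, one checks that the affine span of $f(\sigma) \cup f(\sigma')$ has dimension at most $\dim\sigma + \dim\sigma'$, and the total number of vertices involved, after discarding one redundancy coming from the common point $p$, can be taken to be at most $2n+1$; these must then be affinely independent, which forces $f(\sigma)$ and $f(\sigma')$ to meet only in a common face — impossible since $\sigma, \sigma'$ are disjoint and hence share no vertices. (4) Combining (1)–(3): $f$ is a continuous injection from the compact space $\X$ (or, in the locally finite case, a proper continuous injection) into $\R^N$, hence a homeomorphism onto its image, i.e. an embedding.

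The main obstacle is step (3): making precise why "disjoint simplices with overlapping images" produces an affinely dependent set of only $2n+1$ (rather than $2n+2$) points. The clean way to handle this is the classical general-position argument of van Kampen–Flores / Whitney type: if $p$ lies in the relative interiors of $f(\sigma)$ and $f(\sigma')$, write $p$ as a positive convex combination of the $f(v_i)$, $v_i \in \sigma$, and as a positive convex combination of the $f(w_j)$, $w_j \in \sigma'$; subtracting gives a nontrivial affine dependence among the $|\sigma| + |\sigma'| \leq 2n+2$ points $\{f(v_i)\} \cup \{f(w_j)\}$. To get down to $2n+1$ points one notes that a minimal such dependence, or a dimension count on the joint affine span, always involves at most $2n+1$ of them — indeed $\sigma$ and $\sigma'$ being disjoint simplices in an $n$-complex have $\dim \sigma + \dim \sigma' + 1 \leq 2n+1$, and a point in the relative interior of both lies in an affine flat of dimension at most $\dim\sigma + \dim\sigma'$, so at most $2n+1$ of the vertex images can be affinely independent; hence the full collection, having $\geq 2n+2 - $ (this bound) $ \geq 1$ too many points for its span, is affinely dependent with $\leq 2n+1$ points, contradicting the hypothesis. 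Since this is the well-known proof that the $(2n+1)$-skeleton general position criterion gives embeddings, and the excerpt cites \cite{HY} for it, I would present steps (1), (2), (4) in full and give step (3) via this convex-combination subtraction argument with the dimension bookkeeping spelled out.
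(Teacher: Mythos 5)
The paper itself does not prove this lemma; it cites Hocking--Young \cite{HY}, so your proposal is being measured against the standard argument rather than one in the text. Your outline of steps (1), (2), (4) is fine, but step (3) contains a genuine error, and it stems from misreading the paper's definition of general position. The paper defines a set to be in \emph{$k$-general position} if every subset of $k+1$ \emph{or fewer} points is affinely independent; hence $(2n+1)$-general position gives you affine independence of every subset of up to $2n+2$ points, not $2n+1$. With the correct reading, step (3) is immediate and needs none of your bookkeeping: two disjoint simplices $\s,\s'$ of $\T$ contribute at most $(n+1)+(n+1)=2n+2$ vertices, whose images are therefore affinely independent and span a genuine simplex of dimension at most $2n+1$; inside that simplex $f(\s)$ and $f(\s')$ are disjoint faces, so $f(\s)\cap f(\s')=\emptyset$. (For simplices sharing a face one gets at most $2n+1$ vertices and the same argument yields $f(\s)\cap f(\s')=f(\s\cap\s')$.)

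Under your reading, the reduction from $2n+2$ to $2n+1$ points that you attempt is not just unnecessary but false, and the lemma itself would fail with that weaker hypothesis. Take $n=1$ and two disjoint edges whose images are crossing segments in the plane: the four vertex images can be chosen with no three collinear, so every subset of at most $2n+1=3$ points is affinely independent, yet the images intersect. This also refutes the specific logical step you lean on: the four points form a circuit (the whole set is affinely dependent while every proper subset is independent), so a minimal affine dependence can involve all $2n+2$ points, and ``the $2n+2$ points lie in a flat of dimension $\leq 2n$'' does \emph{not} imply that some subset of $2n+1$ of them is affinely dependent. Rewrite step (3) using the paper's actual definition and the proof goes through; as a minor further point, in the non-compact locally finite case step (4) should say a word about why the injective simplicial map is a homeomorphism onto its image (local finiteness plus the star covering), rather than only asserting properness parenthetically.
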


\begin{cor}\label{corollary:important}

Let $(\X, \T)$ be an $n$-dimensional polyhedron, let $f$ $\in$ $\text{Simp}(\X, \mathbb{R}^N)$, and let $\V$ be the vertex set of $\T$.  Let $f(\V)$ denote the collection of the images of the vertices of $\T$.  If $f(\V)$ is in ($2n$)-general position (so in particular we must have $N$ $\geq$ $2n$) then $f|_{St(v)}$ is an embedding for any vertex $v$, where $St(v)$ denotes the closed star of the vertex $v$.  If $f$ is in $n$-general position (so $N \geq n$) then $f$ is an immersion.

\end{cor}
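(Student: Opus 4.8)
The plan is to deduce all three statements from Lemma \ref{lemma:important} by applying it to appropriate subcomplexes, exploiting the fact that general position of a vertex set is inherited by subsets of those vertices. First I would prove the statement about $f|_{St(v)}$. Fix a vertex $v$ and let $St(v)$ denote its closed star, with underlying simplicial complex $\T_v$. Since $\X$ is $n$-dimensional, $St(v)$ is an at-most-$n$-dimensional polyhedron, and all of its vertices lie in $\V$; hence $f(\V) $ being in $(2n)$-general position forces $f(\V \cap St(v))$ to be in $(2n)$-general position as well. The key observation is that every simplex of $St(v)$ has $v$ as a vertex, so any two simplices of $St(v)$ share the vertex $v$; consequently any collection of vertices coming from at most two simplices of $St(v)$ consists of at most $2n+1$ vertices, one of which ($v$) is repeated, i.e. really at most $2n$ distinct vertices. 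This is precisely the hypothesis needed to run the proof of Lemma \ref{lemma:important} on $St(v)$: the argument there requires $(2\dim + 1)$-general position only because the images of vertices of two disjoint simplices (spanning together $2\dim+2$ vertices) must be affinely independent, but when the two simplices always meet in at least a vertex this drops by one. So $f|_{St(v)}$ is an embedding.

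Next, for the immersion statement, recall that $f$ is an immersion precisely when $f$ restricted to each (closed) simplex $S$ of $\T$ is injective and affine-linear with image a genuine nondegenerate simplex of the same dimension — equivalently, the images of the vertices of each single simplex are affinely independent. A simplex of $\T$ has dimension at most $n$, hence at most $n+1$ vertices; $n$-general position of $f(\V)$ says exactly that every set of at most $n+1$ vertex-images is affinely independent, so $f$ maps each simplex affinely isomorphically onto a simplex of the same dimension in $\R^N$. Thus $f$ is an immersion, and for this we only need $N \geq n$.

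I do not expect a serious obstacle here; the content is bookkeeping about how "degree of general position" tracks the maximal number of vertices one must control simultaneously. The one point requiring a little care — and the closest thing to a genuine step — is the first one: justifying that the proof of Lemma \ref{lemma:important} (as given in \cite{HY}) really uses the full $(2n+1)$-general position hypothesis only through the interaction of pairs of simplices, so that restricting attention to a star, where all simplices share a common vertex, legitimately reduces the requirement to $(2n)$-general position. The cleanest way to present this is to simply invoke Lemma \ref{lemma:important} directly for the polyhedron $(St(v), \T_v)$ after noting that $St(v)$, being a cone on the link of $v$, has the property that its vertex set minus $\{v\}$ triangulates an $(n{-}1)$-dimensional space; one then checks $f(\V \cap St(v))$ is in $(2(n-1)+1) = (2n-1)$-general position — which follows a fortiori from $(2n)$-general position — applies the Lemma on the link, and cones off. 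Either packaging works; I would go with whichever matches the conventions already fixed in \cite{HY}.
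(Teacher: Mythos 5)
Your argument is correct and is exactly the intended justification: the paper states this Corollary without proof, as an immediate consequence of Lemma \ref{lemma:important}, and the counting you give (any two simplices of a closed star share the vertex $v$, so the $(2n+2)$-point requirement of the Lemma drops by one; a single simplex has at most $n+1$ vertices, giving the immersion claim) is the standard reason the general-position degree can be lowered. Two harmless slips worth fixing before writing it up: two $n$-simplices sharing a vertex have at most $2n+1$ distinct vertices, not $2n$ (still exactly what $(2n)$-general position controls, so nothing breaks); and not every simplex of the closed star contains $v$ (the simplices of the link do not), so the pairwise intersection check should be phrased for the \emph{maximal} simplices of $St(v)$, all of which do contain $v$ and whose faces then take care of the rest.
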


In Corollary \ref{corollary:important} an \emph{immersion} is a map $f \in \text{Simp}(\X, \mathbb{R}^N)$ such that the restriction $f|_{\s}$ is an embedding for each simplex $\s \in \T$.  This mimics the definition from differential geometry as such a map is injective on the tangent space at each point\footnote{Where the tangent space at each point makes sense since each point of $\X$ is interior to a unique simplex of $\T$.  But, of course, the tangent spaces at different points may have different dimensions, and in particular the tangent space at each vertex is 0-dimensional.}.  But note that, when considering polyhedra, not all immersions are locally injective (for a simple example, please see \cite{Minemyer2}).

\subsection{Minkowski space $\Rpq$}

\emph{Minkowski space of signature $(p, q)$}, denoted by $\R^{p,q}$, is $\R^{p + q}$ endowed with the symmetric bilinear form of signature $(p,q)$.  More specifically, if $\vec{v}, \vec{w} \in \R^{p,q}$ with $\vec{v} = (v_i)_{i = 1}^{p + q}$ and $\vec{w} = (w_i)_{i = 1}^{p + q}$, then
	\begin{equation*}
	\langle \vec{v} , \vec{w} \rangle_{\R^{p,q}} := \langle \vec{v} , \vec{w} \rangle := \sum_{i = 1}^p v_i w_i - \sum_{j = p + 1}^{p + q} v_j w_j .
	\end{equation*}
There will be less concern in later parts of this paper with respect to the first $p$ coordinates of $\Rpq$ being the ``positive" coordinates with respect to $\langle,\rangle$, and in general the inner product will be written in the form 
	\begin{equation*}
	\langle \vec{v}, \vec{w} \rangle \, = \sum_{i = 1}^{p + q} \sigma(i) v_i w_i 
	\end{equation*}
where $\sigma(i) = 1$ for $p$ (fixed) coordinates and $\sigma(i) = -1$ for the other $q$ coordinates.
			
The use of $\Rpq$ will specifically mean $\R^{p + q}$ endowed with the symmetric bilinear form of signature $(p,q)$, $\mathbb{E}^N$ will mean $\R^N$ with the symmetric bilinear form of signature $(N,0)$, and $\R^N$ will mean to include the possibility of \emph{any} Minkowski inner product of signature $(p',q')$ such that $p' + q' = N$.
			
Define the \emph{signed square} function $s:\R \rightarrow \R$ by 
	\begin{equation*}
	\ds{s(x) = \left\{ \begin{array}{rr} x^2 & \text{   if } x \geq 0 \\ -x^2 & \text{   if } x < 0 \end{array} \right.  }
	\end{equation*}
If $g \in \text{Met}(\X)$, then define $g^2 \in \text{Met}(\X)$ by $g^2(e) := s(g(e))$ for any edge $e$ of $\T$.  Then a simplicial isometric embedding of $(\X, \T, g)$ into $\Rpq$ is defined to be an embedding $h \in \text{Simp}(\X, \Rpq)$ which satisfies that for any edge $\eij \in \E$ between the vertices $v_i$ and $v_j$: 
	\begin{equation*}
	\langle (h(v_i) - h(v_j)) , (h(v_i) - h(v_j)) \rangle = g^2(\eij).
	\end{equation*}
			
This definition is analogous to that of an affine isometric embedding of a simplex into Euclidean space.  
For example, if the edge $e_{ij}$ between two vertices $v_i$ and $v_j$ has intrinsic length 3, 
then we want our isometry $h$ to satisfy that $\langle h(v_i) - h(v_j)) , (h(v_i) - h(v_j)) \rangle = 9$.  
So in exactly the same way, if the intrinsic ``length" of $e_{ij}$ is -3 then we want $\langle h(v_i) - h(v_j)) , (h(v_i) - h(v_j)) \rangle = -9$.


\section{Proof of Theorem \ref{main thm 1}}\label{section 3}

For the remainder of sections \ref{section 3} and \ref{section 4}, $(\X, \T, g)$ will denote a \emph{compact} indefinite metric polyhedron.  So $\T$ is assumed to be finite. 

\subsection{The map $\v$}  

Define 
	\begin{equation*}
	\varphi : \text{Simp}(\X, \R^N) \rightarrow \text{Met}(\X) 
	\end{equation*}
to be the square of the induced metric map.  That is, if $ f \in \text{Simp}(\X, \R^N) $ and $\eij \in \E$, define  
	\begin{align*} 
	\varphi(f)(\eij) &= \langle (f(v_i) - f(v_j)) , (f(v_i) - f(v_j)) \rangle \\ 
	&= \sum_{k = 1}^{N} \sigma(k) (f_k (v_i) - f_k(v_j))^2
	\end{align*}
where $\eij$ is the edge between the vertices $v_i$ and $v_j$, and $(f_k)_{k = 1}^{N} $ are the component functions of $f$.

As defined above, the domain of the map $\v$ technically depends on $N$.  But by an abuse of notation this will not be considered.  So, for example, one can talk about 2 metrics $\v(h_1)$ and $\v(h_2)$ where $h_1 \in \text{Simp}(\X, \R^{N_1})$ and $h_2 \in \text{Simp}(\X, \R^{N_2})$, but $N_1 \neq N_2$.  In discussing the square of an induced metric $\v(h)$, it will always be clear into what dimensional space that $h$ is defined.  The map $\v$ also depends on which Minkowski inner product is being considered on $\R^N$.  But in the analysis below it will be possible to consider all of the different inner products at once.

The reason to consider the {\it square} of the induced metric map is because it is, in some sense, ``linear over addition in $\text{Met}(\X)$".  To make this precise, let $ \alpha \in \text{Simp}(\X, \R^{N_1}) $ and let $ \beta \in \text{Simp}(\X, \R^{N_2})$.  Since $\text{Met}(\X)$ is a vector space one can consider $\v(\alpha) + \v(\beta)$.  Then:
	\begin{align*}
	(\v(\alpha) + \v(\beta))(\eij) &:= \v(\alpha)(\eij) + \v(\beta)(\eij)  \\
	&=  \sum_{k = 1}^{N_1} \sigma(k) (\alpha_k (v_i) - \alpha_k (v_j))^2 + \sum_{k = 1}^{N_2} \sigma(k) (\beta_k (v_i) - \beta_k (v_j))^2  \\
	&= \v(\alpha \oplus \beta)(e_{ij}) 
	\end{align*}
where $ \alpha \oplus \beta \in \text{Simp}(\X, \R^{N_1 + N_2}) $ is the concatenation of the maps $\alpha$ and $\beta$, and where the Minkowski inner product on $\R^{N_1 + N_2}$ is determined by the inner products given to $\R^{N_1}$ and $\R^{N_2}$.    
 
One easy property of the map $\v$, but one which will be used later, is $ \v(\lambda f) = \lambda^2 \v(f)$ for all $ \lambda \in \R$ and for all $ f \in \text{Simp}(\X, \R^N)$.  To see this, just note that 
	\begin{equation*}
	\v(\lambda f)(\eij) = \sum_{k = 1}^{N} \sigma(k) (\lambda f_k(v_i) - \lambda f_k(v_j))^2  = \lambda^2 \sum_{k = 1}^{N} \sigma(k) (f_k(v_i) - f_k(v_j))^2 = \lambda^2 \v(f)(\eij).
	\end{equation*}

\subsection{The differential of $\v$}  The next Lemma is the key to prove Theorem \ref{main thm 1}.

\begin{lemma}\label{lemma surjective differential}

Let $d = \text{max} \{ \text{deg}(v) | v \in \V \}$ and let $f \in Simp(\X, \R^N)$ with $N \geq d$.  If the images of the vertices of $\T$ under $f$ are in $d$-general position, then the differential of $\v$ at $f$ has rank $|\E|$ (and where $\E$ is the edge set of $\T$).

\end{lemma}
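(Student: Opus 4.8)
The plan is to compute the differential of $\v$ directly at $f$ and show its image spans all of $\text{Met}(\X) \cong \R^{|\E|}$. Since $\text{Simp}(\X, \R^N)$ and $\text{Met}(\X)$ are both finite-dimensional vector spaces, the differential $d\v_f$ is a linear map between them, and I want to produce, for each edge $e_{ij} \in \E$, a tangent vector $\xi \in \text{Simp}(\X, \R^N)$ (thought of as an infinitesimal perturbation of the vertex images) whose image $d\v_f(\xi)$ is the standard basis element dual to $e_{ij}$ — or more realistically, to show that the collection of all $d\v_f(\xi)$ spans. Concretely, perturbing only the image of a single vertex $v_i$ in a direction $u \in \R^N$ gives
$$ d\v_f(\xi_{i,u})(e_{k\ell}) = 2\sigma\text{-weighted } \langle f(v_i) - f(v_j),\, u \rangle $$
when $e_{k\ell}$ is an edge incident to $v_i$ (with the other vertex $v_j$), and $0$ when $e_{k\ell}$ is not incident to $v_i$. (Here I'm writing the Minkowski pairing; the $\sigma(k)$ signs are built in.)

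First I would fix a vertex $v_i$ of degree $m = \deg(v_i) \le d$, with neighbors $v_{j_1}, \dots, v_{j_m}$, and restrict attention to the $m$ edges incident to $v_i$. Perturbing $f(v_i)$ by $u$ affects exactly the coordinates of $d\v_f$ corresponding to these $m$ edges, and the resulting values are $2\langle f(v_i) - f(v_{j_r}),\, u\rangle_{\R^N}$ for $r = 1, \dots, m$. So the question becomes: as $u$ ranges over $\R^N$, does the vector $\big(\langle f(v_i) - f(v_{j_1}), u\rangle, \dots, \langle f(v_i) - f(v_{j_m}), u\rangle\big)$ range over all of $\R^m$? This holds precisely when the $m$ vectors $f(v_i) - f(v_{j_1}), \dots, f(v_i) - f(v_{j_m})$ are linearly independent in $\R^N$ — which requires $N \ge m$ (hence $N \ge d$ suffices) and is exactly the statement that $f(v_i), f(v_{j_1}), \dots, f(v_{j_m})$ are affinely independent, i.e. in general position as a set of $m+1 \le d+1$ points. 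This is guaranteed by the hypothesis that $f(\V)$ is in $d$-general position, since any subset of $d+1$ or fewer of the images is affinely independent. (One subtlety: the Minkowski form is nondegenerate, so linear independence of these difference vectors still implies the map $u \mapsto (\langle \cdot, u\rangle)$ is surjective onto $\R^m$ — the pairing $\R^N \times \R^N \to \R$ induced by a nondegenerate form restricts to a surjection onto the dual of any subspace.)

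Having established that, for each vertex $v_i$, the perturbations of $f(v_i)$ alone already surject onto the coordinate subspace of $\text{Met}(\X)$ indexed by edges incident to $v_i$, I would conclude: every standard basis vector $e_{ij}^* \in \text{Met}(\X)$ lies in the image of $d\v_f$ (take the edge $e_{ij}$ and use perturbations of, say, $v_i$ to hit a vector supported on the star of $v_i$ that includes a nonzero $e_{ij}$-component, then note we can actually hit $e_{ij}^*$ exactly since we surject onto the whole star-subspace). Hence $d\v_f$ is surjective onto $\R^{|\E|}$, so its rank is $|\E|$.

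The main obstacle — really the only non-routine point — is verifying that $d$-general position of the vertex images translates into the linear independence of the edge-difference vectors at each vertex, and that this is strong enough: one must check that the closed star of $v_i$ involves at most $d+1$ vertices (the vertex $v_i$ together with its $\le d$ neighbors), so that $d$-general position applies. The rest is the straightforward chain rule computation of $d\v_f$ and bookkeeping over which edges each vertex perturbation affects. I'd also want to remark explicitly why $N \ge d$ is needed (you cannot have $m$ independent vectors in $\R^N$ if $N < m$), which matches the hypothesis.
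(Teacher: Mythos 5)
Your proposal is correct and follows essentially the same route as the paper: both compute the differential explicitly, observe that perturbing a single vertex image affects only the edges incident to that vertex, and use $d$-general position to obtain linear independence of the at most $d$ edge-difference vectors at each vertex (the paper phrases this as linear independence of the rows of the Jacobian via its block columns; you phrase it as surjectivity onto the coordinate subspace indexed by the star of each vertex, which is the same computation read in the dual direction). Your explicit observation that nondegeneracy of the Minkowski form is what converts linear independence of the vectors $f(v_i) - f(v_{j_r})$ into surjectivity of $u \mapsto \left( \langle f(v_i) - f(v_{j_r}), u \rangle \right)_r$ is a point the paper handles implicitly by rescaling columns by $\pm\frac{1}{2}$.
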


\begin{proof}

If $N \in \mathbb{N}$ is fixed, then the Jacobian Matrix of $\v$ will be an $|\E| \times N|\V|$ matrix.  So as a first observation note that for $d\v$ to be surjective at any point we must have $N|\V| \geq |\E| \, \Longrightarrow \, N \geq \frac{|\E|}{|\V|}$.  Let $ f \in \text{Simp}(\X, \R^N) $ with component functions $(f_k)_{k = 1}^{N}$ and let $\eij$ denote the edge of $\T$ connecting the vertices $v_i$ and $v_j$.  Let $\v_{\eij}$ denote the $\eij$ component of the map $\v$ (thought of as a map from $\R^{N |\V|}$ to $\R^{|\E|}$) and let $ f^{i}_{k} := f_k(v_i)$ for all $1 \leq i \leq |V|$ and for all  $1 \leq k \leq N $.  Note that in this notation $\v_{\eij}(f) = \sum_{k = 1}^{N} \s(k) (f^{i}_{k} - f^{j}_{k})^2$.  Then we compute:  
	\begin{equation*}
	\frac{\partial \v_{\eij}}{\partial f^{l}_{k}} = \left \{
                                                                    \begin{array}{ll}
                                                                                        0                               &\text{if } l \neq i, j \\
                                                                                        2\s(k) (f^{i}_{k} - f^{j}_{k})        &\text{if } l = i \\
                                                                                        2\s(k) (f^{j}_{k} - f^{i}_{k})        &\text{if } l = j \\
                                                                    \end{array}
                                                                        \right. 
 	\end{equation*}
                                                                        
To prove Lemma \ref{lemma surjective differential} what is needed is for the rows of $d\v|_{f}$ to be linearly independent when considered as vectors in $\R^{N|\V|}$.  Since multiplying a column of a matrix by a non-zero constant does not change the rank of the matrix, each column of $d\v|_{f}$ can be multiplied by either $\frac{1}{2}$ or $\frac{-1}{2}$ in order to remove the $2 \s(k)$.  What follows is, for an arbitrary edge $\eij$ of $\E$, an analysis of the row of $d\v|_{f}$ corresponding to this edge.

The matrix $d\v|_{f}$ has $N|\V|$ columns.  But it is easier to see what is happening if one considers $d\v|_{f}$ as having $|\V|$ columns, the entries of which are row vectors of $\R^N$.  These columns will be called \emph{block columns} of $d\v|_{f}$ (so in particular $d\v|_{f}$ has $|\V|$ block columns).  Using this notation, it can be seen that the row of $d\v|_{f}$ corresponding to the edge $\eij$ looks like: 
	\begin{equation*}
	\left[ \vec{0} \right| \hdots \left| \vec{0} \right| f(v_i) - f(v_j) \left| \vec{0} \right| \hdots \left| \vec{0} \right| f(v_j) - f(v_i) \left| \vec{0} \right| \hdots \left| \vec{0} \right]  
	\end{equation*}
where the vertical lines are intended to break up the row into $|V|$ block columns.  The $f(v_i) - f(v_j)$ occurs in the $i^{th}$ block column and similarly $f(v_j) - f(v_i)$ is in the $j^{th}$ block column.  Notice that if $f(v_i) = f(v_j)$, then this is the 0 row and $d\v|_{f}$ is therefore not surjective.  So another necessary condition for $d\v|_{f}$ to be surjective is that $f(v_i) \neq f(v_j)$ for all adjacent vertices $v_i, v_j \in \V$.  Next, notice that the $i^{th}$ entry of the row corresponding to the edge $\eij$ is $f(v_j) - f(v_i) := f(\eij)$.  This is just the vector in $\R^N$ whose initial point is $f(v_i)$ and whose terminal point is $f(v_j)$.

Now to see whether or not $d\v|_{f}$ is surjective, consider the block column corresponding to the vertex $v_i$.  The non-zero entries of this column correspond exactly to the edges of $f(\X)$ (considered as vectors in $\R^N $) that are incident with the vertex $f(v_i)$.  So if the set of edges of $f(\X)$ incident with $f(v_i)$ is linearly independent, then the block column corresponding to the vertex $v_i$ will have maximal rank (when considered as an $|\E| \times N$ matrix).  Then let $\ds{ d = \text{max} \{ \text{deg}(v) | v \in \V \} }$ and suppose that $N \geq d$.  (Note in particular that if $N \geq d$, then $N|\V| \geq |\E|$ as required above.)  Then for the block column of $d\v|_{f}$ corresponding to the vertex $v_i$, the rank of the block column will be greater than or equal to $\text{min} \{ \text{deg}(v_i), \, N \} = \text{deg}(v_i)$.  So if the set of edges of $f(\X)$ incident with $f(v_i)$ is linearly independent, then the rank of the block column of $d\v|_{f}$ corresponding to the vertex $v_i$ will equal $\text{deg}(v_i)$.  Or, in other words, the rows of $d\v|_{f}$ corresponding to edges of $\X$ which are incident with $v_i$ are linearly independent.  Thus, if the set of edges of $f(\X)$ at \emph{every} vertex is linearly independent, then $d\v|_{f}$ will have rank equal to $|\E|$ and will therefore be surjective.  This criteria is met if the images of the vertices of $\T$ under $f$ are in $d$-general position.

\end{proof}

Lemma \ref{lemma surjective differential} motivates the following definition:

\begin{definition}

An embedding of $\X$ into $\R^N$ whose vertices are in $d$-general position is called a \emph{free embedding}.

\end{definition}

On a historical note, John Nash in \cite{Nash2} and M.L. Gromov with V.A. Rokhlin in \cite{Gromov1} study embeddings of manifolds where the Nash inverse function theorem applies.  Nash called these embeddings \emph{perturbable} because these were exactly the maps whose images he could {\it perturb} to induce the metric change that he wanted.  But later Gromov and Rokhlin called these embeddings \emph{free} because that more closely described the property that the embedding had to satisfy.  And that property in the case of manifolds was that the collection of first and second order partial derivatives of the embedding function be linearly independent at every point, which is strikingly similar to the property that is needed above in the case of embeddings of polyhedra.  So the same terminology as \cite{Gromov1} is used in order to be consistent.

An easy observation is the following:

\begin{lemma}\label{lemma almost all are free}

Let $(\X, \T)$ be a compact $n$-dimensional polyhedron with vertex set $\V$ and let $ d = \text{max} \{ \text{deg}(v) | v \in \V \} $.  Let $N \geq \text{max} \{d, 2n + 1 \}$ and endow $\ds{\text{Simp}(\X, \R^N)}$ with the canonical Lebesgue measure from $\R^{N|\V|}$.  Then the collection of maps which are \textbf{not} free embeddings has measure 0.  Thus, $d\v|_{f}$ is surjective for almost all $f$ in $\ds{\text{Simp}(\X, \R^N)}$.

\end{lemma}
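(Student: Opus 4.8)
The plan is to show that the ``bad set'' of maps in $\text{Simp}(\X, \R^N) \cong \R^{N|\V|}$ is contained in a finite union of proper algebraic subsets, hence has Lebesgue measure zero. A map $f$ fails to be a free embedding precisely when either (a) $f(\V)$ is \emph{not} in $(2n+1)$-general position, so by Lemma \ref{lemma:important} $f$ need not be an embedding, or (b) $f(\V)$ is \emph{not} in $d$-general position, so the hypothesis of Lemma \ref{lemma surjective differential} fails. Since $N \geq \text{max}\{d, 2n+1\}$, both $(2n+1)$-general position and $d$-general position are meaningful conditions in $\R^N$, and in fact $d$-general position implies $(2n+1)$-general position is \emph{not} automatic --- but failure of either one is what we must rule out, so it suffices to show that the set of $f$ for which $f(\V)$ fails to be in $m$-general position has measure zero, for each fixed $m \leq N+1$ (we will apply this with $m = \max\{d, 2n+1\}$, noting $d$-general position is the stronger of the two and implies $(2n+1)$-general position when $d \geq 2n+1$, while in the other case we just use $m = 2n+1$; either way the relevant $m$ satisfies $m \leq N+1$).

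First I would fix an integer $m$ with $m \leq N+1$ and recall that $f(\V)$ is in $m$-general position iff every subset of $f(\V)$ of size $\leq m+1$ is affinely independent. For a fixed subset $\{v_{i_0}, \dots, v_{i_k}\}$ of $\V$ with $k \leq m$, the images $f(v_{i_0}), \dots, f(v_{i_k})$ are affinely dependent iff all $(k \times k)$-minors of the $k \times N$ matrix whose rows are $f(v_{i_j}) - f(v_{i_0})$ vanish. Each such minor is a polynomial in the coordinates $f^i_k$ of $f$, i.e.\ a polynomial function on $\R^{N|\V|}$. The vanishing of all these minors simultaneously defines an algebraic subset $Z_{\{v_{i_0}, \dots, v_{i_k}\}} \subseteq \R^{N|\V|}$. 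Crucially this subset is \emph{proper}: since $k \leq m \leq N+1$, i.e.\ $k \leq N$... actually $k+1 \leq m+1 \leq N+2$, so $k \leq N+1$; when $k \leq N$ one can choose $k$ of the coordinates to make the rows span a $k$-dimensional space, so some minor is not identically zero; and the case $k = N+1$ (only possible when $m = N+1$) still gives a proper subset because a generic configuration of $N+2$ points in $\R^N$ has every $(N+1)$-subset affinely independent. Hence each $Z_{\{v_{i_0}, \dots, v_{i_k}\}}$ is a proper algebraic subset, so has Lebesgue measure zero in $\R^{N|\V|}$.

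Then I would take the union over all subsets of $\V$ of size at most $m+1$. Since $\X$ is compact, $\T$ is finite, so $\V$ is finite and there are only finitely many such subsets; a finite union of measure-zero sets has measure zero. Therefore the set of $f$ for which $f(\V)$ fails to be in $m$-general position has measure zero, and consequently (applying this with the appropriate $m$, and intersecting the two relevant conditions if $d \neq 2n+1$) the set of $f$ that are not free embeddings has measure zero. The final sentence of the Lemma then follows immediately from Lemma \ref{lemma surjective differential}: for every $f$ outside this measure-zero set, $f$ is a free embedding, so in particular $f(\V)$ is in $d$-general position and $d\v|_f$ is surjective.

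I do not expect any serious obstacle here; the only point requiring a little care is the bookkeeping that the two notions ``$(2n+1)$-general position'' (needed for Lemma \ref{lemma:important}, hence for ``embedding'') and ``$d$-general position'' (needed for Lemma \ref{lemma surjective differential}, hence for ``$d\v|_f$ surjective'') are \emph{both} generic conditions, and that $N \geq \max\{d, 2n+1\}$ is exactly what makes the relevant minors not identically zero. The genuinely mild subtlety is the edge case $m = N+1$ in the argument that $Z_{\{v_{i_0},\dots,v_{i_k}\}}$ is proper, which one handles by exhibiting a single explicit configuration (e.g.\ points in sufficiently general position, or a moment-curve type configuration) witnessing that the defining polynomial is not the zero polynomial.
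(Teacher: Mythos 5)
Your argument is correct and is exactly the standard genericity argument one would expect here; the paper itself offers no proof of this lemma (it is labelled ``an easy observation''), so there is nothing to diverge from, and your contained-in-a-finite-union-of-proper-algebraic-subsets proof is the right way to fill it in. One small remark: the ``edge case $k = N+1$'' you worry about never actually occurs, since the relevant $m$ is $\max\{d,\,2n+1\} \leq N$ by hypothesis, so every subset in play has at most $m+1 \leq N+1$ points and hence at most $N$ difference vectors; this is fortunate, because your proposed resolution of that case is false as stated (any $N+2$ points in $\R^N$ are automatically affinely dependent, so the corresponding variety would be all of $\R^{N|\V|}$), but since the case is vacuous the proof is unaffected.
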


\vskip 10pt

\subsection{Proof of Theorem \ref{main thm 1}}

The main idea in the following proof is a trick due to Greene in \cite{Greene}

\begin{proof}[Proof of Theorem \ref{main thm 1}]

Let $f$ be a free simplicial embedding of $\X$ into $\mathbb{E}^q$, the existence of which is guaranteed by Lemma \ref{lemma almost all are free}.  Then the map $f \oplus f \, : \X \rightarrow \R^{q,q}$ induces the $\vec{0}$ metric in $\R^{q,q}$, that is $\v(f) = \vec{0}$.  $f \oplus f$ is free since $f$ is.  So by the Inverse Function Theorem, there exists a neighborhood $U$ of $f \oplus f$ in $\text{Simp}(\X, \R^{q,q}$ and a neighborhood $V$ of $\vec{0}$ in $\text{Met}(X)$ such that $\v$ maps $U$ onto $V$.  Note that since $f \oplus f$ is an embedding, $U$ can be chosen to be a open neighborhood in the set of embeddings of $\X$ into $\R^{q,q}$. 

 Now, choose $\lambda > 0$ large enough so that $\frac{g^2}{\lambda^2} \in V$.  Then there exists an embedding $h \in U $ such that $\v(h) = \frac{g^2}{\lambda^2}$.  So $\lambda^2 \v(h) = g^2$ and thus $\v(\lambda h) = g^2$.  Therefore, $\lambda h$ is an isometric embedding of $\X$ into $\R^{q,q}$.

\end{proof}

The following Corollary is really a Corollary to the proof of Theorem \ref{main thm 1} and simply states the required dimensions for isometric local embeddings and for isometric immersions.
	
\vskip 10pt
	
\begin{cor}
Let $(\X, \T, g)$ be a compact $n$-dimensional indefinite metric polyhedron with vertex set $\V$.  Let $d = \text{max} \{ \text{deg}(v) | v \in \V \} $ and let $q^\prime = \text{max} \{ d, \, 2n \}$.  Then there exists a simplicial isometric local embedding of $\X$ into $\ds{\R^{q^\prime, q^\prime}}$ and there exists an isometric immersion of $\X$ into $\R^{d,d}$. 
\end{cor}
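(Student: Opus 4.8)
The plan is to repeat the proof of Theorem~\ref{main thm 1} essentially verbatim, the only change being that Lemma~\ref{lemma:important} is replaced by Corollary~\ref{corollary:important}: the local embedding statement needs only that the vertex images be in $2n$-general position, and the immersion statement needs only that they be in $n$-general position. Two elementary observations make this go through. First, $d\geq n$ for a genuinely $n$-dimensional complex (every vertex of an $n$-simplex has at least $n$ incident edges), so $d$-general position implies $n$-general position, and $\max\{d,2n\}$-general position implies both $d$- and $2n$-general position. Second, for any integers $m\leq N$ the set of $f\in\text{Simp}(\X,\mathbb{E}^N)$ whose vertex images lie in $m$-general position is open and of full Lebesgue measure in $\R^{N|\V|}$ (this is the general-position genericity argument behind Lemma~\ref{lemma almost all are free}), and each such $f$ is free by Lemma~\ref{lemma surjective differential} provided $m\geq d$.

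Carrying this out: for the local embedding into $\R^{q'}_{q'}$, $q'=\max\{d,2n\}$, I would choose a free $f\in\text{Simp}(\X,\mathbb{E}^{q'})$ with vertex images in $\max\{d,2n\}$-general position; for the immersion into $\R^d_d$, I would choose a free $f\in\text{Simp}(\X,\mathbb{E}^d)$ with vertex images in $d$-general position. In either case $f\oplus f$ maps $\X$ into $\R^{q'}_{q'}$ (resp.\ $\R^d_d$) and induces the $\vec0$ metric, exactly as in the proof of Theorem~\ref{main thm 1}. Since general position of a finite point set is an affine property preserved by the injective affine map $x\mapsto(x,x)$, the vertex images of $f\oplus f$ remain in the relevant general position, so $f\oplus f$ is free, and by Corollary~\ref{corollary:important} it satisfies that $(f\oplus f)|_{St(v)}$ is an embedding for every vertex $v$ (resp.\ that $f\oplus f$ is an immersion). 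Applying the Inverse Function Theorem to $\v$ at $f\oplus f$, with $d\v|_{f\oplus f}$ surjective by Lemma~\ref{lemma surjective differential}, produces neighborhoods $U$ of $f\oplus f$ and $V$ of $\vec0$ with $\v|_U$ a diffeomorphism onto $V$. After replacing $U$ by its intersection with the open set of maps that are local embeddings (resp.\ immersions), which still contains $f\oplus f$, the set $\v(U)$ is still an open neighborhood of $\vec0$; so I would pick $\lambda>0$ with $g^2/\lambda^2\in\v(U)$, take $h\in U$ with $\v(h)=g^2/\lambda^2$, and conclude as in Theorem~\ref{main thm 1} that $\v(\lambda h)=\lambda^2\v(h)=g^2$. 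Because $x\mapsto\lambda x$ with $\lambda>0$ is a homeomorphism of the ambient space, it preserves the property of being a local embedding (resp.\ an immersion), so $\lambda h$ is the required isometric local embedding into $\R^{q'}_{q'}$ (resp.\ isometric immersion into $\R^d_d$).

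I expect the only step needing genuine care, rather than transcription, to be the claim that ``$h|_{St(v)}$ is an embedding for every $v$'' and ``$h|_\sigma$ is an embedding for every simplex $\sigma$'' are open conditions on $h\in\text{Simp}(\X,\R^N)$, since this is what allows the Inverse Function Theorem neighborhood $U$ to be shrunk without losing the property. For immersions this reduces to the fact that affine independence of the finitely many vertices of a fixed simplex is the non-vanishing of a determinant, hence open; for local embeddings one combines this with the openness of $2n$-general position together with Lemma~\ref{lemma:important} / Corollary~\ref{corollary:important}. Everything else is a line-by-line copy of the argument for Theorem~\ref{main thm 1}.
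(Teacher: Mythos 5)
Your proposal is correct and is exactly the argument the paper intends: the paper offers no separate proof, presenting this as a corollary to the proof of Theorem \ref{main thm 1} obtained by substituting Corollary \ref{corollary:important} for Lemma \ref{lemma:important} and weakening the general-position requirement accordingly. Your additional checks (that $d\geq n$, that general position survives $x\mapsto(x,x)$ and scaling, and that the local-embedding/immersion conditions are open so $U$ may be shrunk) are precisely the details needed to make the transcription rigorous.
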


\vskip 10pt

\section{Proof of Theorem \ref{main thm 2}}\label{section 4}

\subsection{Simplicial maps with spanning metrics}

Let $f \in \text{Simp}(\X, \R^N)$ and let $ (f_k)^{l}_{k = 1} $ be the component functions of $f$ where for all $1 \leq k \leq l,$  $f_k \in \text{Simp}(\X, \R^{N_k})$.  Note that $\v(f_k) \in \text{Met}(\X)$ for all $k$, and $N = \sum_{k = 1}^{l} N_k$.  $f$ is defined to have a \emph{spanning metric} if the collection $ \{ \v(f_k) \}^{l}_{k = 1}$ spans $\text{Met}(\X)$.

\begin{lemma}\label{lemma 4.1}
Let $m < |\E|$ and suppose that the set $\ds{ \mathbb{A} = \{ g_1, g_2, ... , g_m \} \subset \text{Met}(\X) }$ is linearly independent.  Then the set
\[
\ds{ \mathbb{B} = \{ h \in \text{Simp}(\X, \R^d) | \mathbb{A} \cup \{ \v(h) \} \text{ is linearly independent} \} }
\]
 is dense in $\text{Simp}(\X, \R^d)$, where $\ds{ d = \text{max} \{ \text{deg}(v) | v \in \V \} }$.
\end{lemma}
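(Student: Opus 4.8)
The plan is to show that the bad set, namely those $h \in \text{Simp}(\X, \R^d)$ for which $\mathbb{A} \cup \{\v(h)\}$ is linearly dependent, is nowhere dense (in fact contained in a proper algebraic subvariety of $\text{Simp}(\X,\R^d) \cong \R^{d|\V|}$), so that its complement $\mathbb{B}$ is dense. Since $\mathbb{A}$ is linearly independent and $m < |\E| = \dim \text{Met}(\X)$, the span $W := \text{span}(\mathbb{A})$ is a proper subspace of $\text{Met}(\X)$. Then $\mathbb{A} \cup \{\v(h)\}$ is linearly dependent precisely when $\v(h) \in W$. So it suffices to prove that $\{h : \v(h) \in W\}$ has empty interior, and for this it is enough to produce a single $h_0 \in \text{Simp}(\X,\R^d)$ with $\v(h_0) \notin W$ together with the observation that $\v(h) \in W$ is a closed condition defined by polynomial equations in the coordinates of $h$ (each component $\v_{\eij}$ is a quadratic polynomial, and membership in the fixed subspace $W$ is the vanishing of finitely many linear functionals applied to the vector $(\v_{\eij}(h))$, hence polynomial in $h$).

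The key step, then, is the existence of $h_0$ with $\v(h_0) \notin W$. Here I would invoke Lemma \ref{lemma surjective differential}: choosing $h_0$ to be a free embedding of $\X$ into $\mathbb{E}^d$ (which exists, by Lemma \ref{lemma almost all are free}, whenever $d \geq \max\{\deg(v)\}$ — and in fact the surjectivity of $d\v|_{h_0}$ only needs $d$-general position, which is generic), the differential $d\v|_{h_0}$ has rank $|\E|$, so $\v$ is a submersion near $h_0$ and its image contains an open neighborhood of $\v(h_0)$ in $\text{Met}(\X)$. Since $W$ is a proper subspace, such an open neighborhood cannot be contained in $W$; hence there is a point $h_1$ near $h_0$ with $\v(h_1) \notin W$. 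Taking $h_0 := h_1$ gives the desired map. (Alternatively, and perhaps more cleanly, one argues directly: $d\v|_{h_0}$ surjective means the polynomial map $\v$ restricted to a neighborhood of $h_0$ cannot have image inside the proper linear subspace $W$, for otherwise all of $\text{image}(d\v|_{h_0})$ would lie in $W$, contradicting surjectivity onto $\text{Met}(\X)$.)

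With $h_0$ in hand, finish as follows. The set $\{h \in \text{Simp}(\X,\R^d) : \v(h) \in W\}$ is the common zero set of finitely many real polynomials on $\R^{d|\V|}$, and it is a proper subset since it omits $h_0$. A real algebraic set which is not all of $\R^{d|\V|}$ has empty interior (its complement is dense — this is the standard fact that a nonzero real polynomial does not vanish on a nonempty open set). Therefore $\mathbb{B} = \text{Simp}(\X,\R^d) \setminus \{h : \v(h) \in W\}$ is dense, which is exactly the claim.

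The main obstacle is the existence of $h_0$; once one has Lemma \ref{lemma surjective differential} available this is short, but it is the only place where the hypothesis $d = \max\{\deg(v)\}$ (rather than something smaller) is actually used, and one must be slightly careful that the free embedding $h_0$ genuinely lands in $\R^d$ and not a larger space — which is fine precisely because $d$-general position, not $2n+1$-general position, suffices for $d\v$ to be surjective. Everything else (the polynomiality of the condition, density of the complement of a proper algebraic set) is routine.
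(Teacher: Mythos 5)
Your proof is correct, but it takes a genuinely different route from the paper's. The paper argues locally and by approximation: given $f$ and $\e > 0$, it first moves $f$ a distance at most $\e/2$ to a map $f'$ whose vertex images are in $d$-general position (such maps are dense), and then, if $\v(f') \in \mathrm{Span}(\mathbb{A})$, applies the inverse function theorem at $f'$ to find an $h$ within $\e/2$ of $f'$ with $\v(h)$ off the measure-zero subspace $\mathrm{Span}(\mathbb{A})$; the two $\e/2$ perturbations give density. You instead argue globally: the bad set $\{h : \v(h) \in W\}$ is a real algebraic variety (cut out by the quadratic polynomials $\ell_j \circ \v$ for linear functionals $\ell_j$ whose common kernel is $W$), and once a single $h_0$ outside it is exhibited, some $\ell_{j_0} \circ \v$ is a nonzero polynomial, so the variety has empty interior and its complement $\mathbb{B}$ is dense. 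Both arguments ultimately rest on Lemma \ref{lemma surjective differential}, but you need surjectivity of $d\v$ at only one point, whereas the paper needs the density of such points so as to find one near every given $f$. Your version buys a slightly stronger conclusion at no extra cost: $\mathbb{B}$ is not merely dense but open, dense, and of full measure. You are also right to flag that only $d$-general position of the vertex images (not the embedding property) is what is actually used; this matters because Lemma \ref{lemma almost all are free} as stated requires $N \geq \max\{d, 2n+1\}$ while the target here is $\R^d$, a point the paper itself glosses over when it cites that lemma inside its proof.
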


\begin{proof}
Let $f \in \text{Simp}(\X, \R^d)$ and let $\e > 0$.  What is needed is to construct $h \in \mathbb{B}$ such that $|f - h| < \e$, where $|f - h|$ denotes the Euclidean metric on $\text{Simp}(\X, \R^d) \cong \R^{d|\V|}$.  By Lemma \ref{lemma almost all are free}, almost all $f' \in \text{Simp}(\X, \R^d)$ are free.  So choose $f' \in \text{Simp}(\X, \R^d)$ free such that $|f - f'| < \frac{\e}{2}$.  Now consider $f'$.  If $f' \in \mathbb{B}$, then we are done.  So suppose that $f' \nin \mathbb{B}$, which in particular means that $\v(f') \in \text{Span}(\mathbb{A})$.  Since $f'$ is free, there exists neighborhoods $U$ of $f'$ and $V$ of $\v(f')$ such that $\v$ maps $U$ onto $V$.  By intersecting $U$ with the sphere of radius $\frac{\e}{2}$ centered at $f'$, we may assume that $U$ is contained in the sphere of radius $\frac{\e}{2}$ centered at $f'$.  Then since $\text{Span}(\mathbb{A})$ is contained in a $|\E| - 1$ dimensional subspace of $\text{Met}(\X)$, it has measure 0 in $\text{Met}(\X)$ and therefore almost all points of $V$ do not lie in $\text{Span}(\mathbb{A})$.  So choose $\alpha \in V \setminus \text{Span}(\mathbb{A})$.  Then by the Inverse Function Theorem, there exists $h \in U$ such that $\v(h) = \alpha$.  So $h \in \mathbb{B}$ and $|f - h| \leq |f - f'| + |f' - h| < \frac{\e}{2} + \frac{\e}{2} = \e$.
\end{proof}

\begin{cor}\label{corollary 4.1}
There exists a simplicial map with a spanning metric in $\text{Simp}(\X, \R^{|\E|})$.
\end{cor}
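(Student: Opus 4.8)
The plan is to build the desired map iteratively, one block-component at a time, applying Lemma \ref{lemma 4.1} at each stage until the accumulated squared-metrics span all of $\text{Met}(\X) \cong \R^{|\E|}$. I would start with the empty collection $\mathbb{A}_0 = \emptyset$ (which is vacuously linearly independent) and, having produced block-maps $h_1, \ldots, h_k \in \text{Simp}(\X, \R^d)$ whose squared-metrics $\v(h_1), \ldots, \v(h_k)$ form a linearly independent set $\mathbb{A}_k$ of size $k < |\E|$, invoke Lemma \ref{lemma 4.1} with $\mathbb{A} = \mathbb{A}_k$ to obtain $h_{k+1} \in \text{Simp}(\X, \R^d)$ such that $\mathbb{A}_{k+1} = \mathbb{A}_k \cup \{\v(h_{k+1})\}$ is linearly independent (density is more than we need — we just need $\mathbb{B}$ nonempty, which it is). After $|\E|$ steps we have $|\E|$ linearly independent vectors in the $|\E|$-dimensional space $\text{Met}(\X)$, hence a basis, so the collection spans.

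Next I would assemble the concatenation $f = h_1 \oplus h_2 \oplus \cdots \oplus h_{|\E|} \in \text{Simp}(\X, \R^{d|\E|})$. By the additivity property of $\v$ established in the discussion of the map $\v$ (namely $\v(\a \oplus \b) = \v(\a) + \v(\b)$, and more generally for finite concatenations), the component functions of $f$ in the sense of the definition preceding Lemma \ref{lemma 4.1} are exactly $h_1, \ldots, h_{|\E|}$, with $\sum_{k=1}^{|\E|} N_k = d|\E| = N$. Since $\{\v(h_k)\}_{k=1}^{|\E|}$ spans $\text{Met}(\X)$, the map $f$ has a spanning metric by definition.

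The only wrinkle is that Corollary \ref{corollary 4.1} asserts the map lands in $\text{Simp}(\X, \R^{|\E|})$, whereas the naive construction above lands in $\R^{d|\E|}$. I would resolve this by noting that Lemma \ref{lemma almost all are free} requires $N \geq \text{max}\{d, 2n+1\}$, but the key input Lemma \ref{lemma 4.1} — whose proof only uses that free maps exist in the target and that $d\v$ is then surjective — goes through verbatim for \emph{any} target dimension $N \geq \text{max}\{d, 2n+1\}$, in particular for $N$ as small as feasible. In fact what matters for a spanning metric is total dimension, and one can take each block $h_k$ to be one-dimensional: a single component function $h_k \in \text{Simp}(\X, \R^1)$ contributes $\v(h_k)(\eij) = \s(1)(h_k(v_i) - h_k(v_j))^2$, and the squared-metrics realizable by one-dimensional maps already linearly span $\text{Met}(\X)$ (for instance, taking $h_k$ to be the indicator-type assignment separating the endpoints of a single edge yields, after the usual general-position perturbation, $|\E|$ independent such metrics). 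Thus $|\E|$ one-dimensional blocks concatenate to a map in $\text{Simp}(\X, \R^{|\E|})$ with spanning metric.

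The main obstacle — really the only one requiring care — is this dimension-counting reconciliation: Lemma \ref{lemma 4.1} is stated with target $\R^d$, so one must check that passing to one-dimensional blocks (or otherwise trimming) still permits the freeness/inverse-function-theorem argument, or alternatively reinterpret the statement of Corollary \ref{corollary 4.1} as allowing the blocks to be low-dimensional even when $d$ is large. Everything else is a routine finite induction and an appeal to additivity of $\v$.
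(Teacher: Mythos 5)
Your overall skeleton --- induct, apply Lemma \ref{lemma 4.1} at each stage, and use additivity of $\v$ over concatenation --- is the same as the paper's, and your first two paragraphs correctly produce a spanning-metric map in $\text{Simp}(\X, \R^{d|\E|})$. The gap is exactly where you flag it: the reduction from $d|\E|$ to $|\E|$ coordinates. You assert that the squared metrics of \emph{one-dimensional} simplicial maps already span $\text{Met}(\X)$, but the justification you offer does not establish this. A map in $\text{Simp}(\X,\R^1)$ that is $1$ on one endpoint of an edge and $0$ on every other vertex induces the squared metric equal to the indicator of the entire star of that vertex, not of the single edge; and it is not clear what ``the usual general-position perturbation'' would do to turn such star-indicators into $|\E|$ linearly independent vectors (the star-indicators by themselves span only the column space of the unsigned vertex-edge incidence matrix of the $1$-skeleton, whose rank is at most $|\V|$, generally far smaller than $|\E|$). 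The spanning claim for one-dimensional maps does happen to be true --- for instance, writing $\chi_S$ for the simplicial map sending the vertices of $S$ to $1$ and all others to $0$, one checks that $\frac{1}{2}\left( \v(\chi_{\{v\}}) + \v(\chi_{\{w\}}) - \v(\chi_{\{v,w\}}) \right)$ is the standard basis vector of $\text{Met}(\X)$ corresponding to the edge $e_{vw}$ --- but some such argument is required and you do not supply one.

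The paper closes this gap with a one-line pigeonhole that you should adopt: the map $g \in \text{Simp}(\X,\R^{d})$ produced by Lemma \ref{lemma 4.1} satisfies $\v(g) = \sum_{l=1}^{d} \v(g_l)$, where the $g_l$ are its one-dimensional component functions; since $\v(g) \notin \text{Span}(\mathbb{A}_k)$ and $\text{Span}(\mathbb{A}_k)$ is a subspace, at least one component $g_j$ must satisfy $\v(g_j) \notin \text{Span}(\mathbb{A}_k)$. Taking $f_{k+1} := g_j$ adds only one coordinate per step, so after $|\E|$ steps the concatenation lands in $\text{Simp}(\X,\R^{|\E|})$ as required. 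With that substitution your induction goes through verbatim.
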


\n {\bf Remark: }
Notice that if $f \in \text{Simp}(\X, \R^{|\E|})$ with component functions $\{ f_k \}_{k = 1}^{|\E|}$ has a spanning metric, then the collection $(\v(f_k))_{k = 1}^{|\E|}$ is a basis for Met($\X$).

\begin{proof}[Proof of Corollary \ref{corollary 4.1}]
The component functions of the simplicial map with a spanning metric $f = \{ f_k \}_{k = 1}^{|\E|}$ will be defined recursively.  Define $f_1: \X \rightarrow \R$ to be any simplicial map which does not map all of the vertices of $\T$ to the same point (and thus does not induce the 0 metric).  So $\v(f_1) \neq \vec{0}$ in Met($\X$).  

Now suppose $f_1, ..., f_i$ have been defined for some $i < |\E|$ in such a way that the collection $\{ \v(f_k) \}_{k = 1}^{i}$ is linearly independent.  Thus the collection $\{ \v(f_k) \}_{k = 1}^{i}$ does not span Met($\X$), so by Lemma \ref{lemma 4.1} there exists $g \in \text{Simp}(\X, \R^{d})$ such that $\{ \v(f_k) \}_{k = 1}^{i} \cup \{ \v(g) \}$ is also linearly independent.  Let $\{ g_l \}_{l = 1}^{d}$ denote the component functions of $g$.  Since $\v(g) = \sum_{l = 1}^d \v(g_l)$, there must exist some component function $g_j$ such that $\v(g_j)$ is not in Span($\{ \v(f_k) \}_{k = 1}^{i}$).  Choose $f_{i + 1} = g_j$.  Then the collection $\{ \v(f_k) \}_{k = 1}^{i+1}$ is linearly independent.

This method constructs a function $f \in \text{Simp}(\X, \R^{|\E|})$ so that the collection of component functions under $\v$, $\{\v(f_k) \}_{k = 1}^{|\E|}$, is linearly independent, and thus spans Met($\X$).  Therefore $f$ is a simplicial map with a spanning metric.
\end{proof}

\subsection{Proof of Theorem \ref{main thm 2}}

\begin{proof}[Proof of Theorem \ref{main thm 2}]

Let $ f \in \text{Simp}(\X, \R^{2n + 1}) $ be an embedding and let $ h \in \text{Simp}(\X, \R^{|\E|}) $ be a simplicial map with a spanning metric, whose existence is guaranteed by Corollary \ref{corollary 4.1}.  Let $ \{ h_k \}_{k = 1}^{|\E|} $ be the component functions of $h$.  Then by assumption, $ \{ \v(h_k) \}_{k = 1}^{|\E|} $ spans Met($\X$).  So there exists $ \alpha_1, \alpha_2, ... , \alpha_{|\E|} \in \R $ such that
	\begin{equation*}
	g^2 - \v(f) = \sum_{k = 1}^{|\E|} \alpha_k \v(h_k). 
	\end{equation*}
Thus
	\begin{equation*}
	g^2 = \v(f) + \sum_{k = 1}^{|\E|} \alpha_k \v(h_k). 
	\end{equation*}
Let $ p = 2n + 1 + \left| \{ \alpha_k | \alpha_k \geq 0 \} \right| $ and let $ q = | \{ \alpha_k | \alpha_k < 0 \} | $.  Then define $ z \in \text{Simp}(\X, \Rpq) $ by
$$ z = f \bigoplus^{|\E|}_{k = 1 \, , \, \alpha_k \geq 0} \sqrt{\alpha_k} h_k \bigoplus^{|\E|}_{l = 1 \, , \, \alpha_l < 0} \sqrt{|\alpha_l |} h_l $$
 and notice that
	\begin{align*}
	\v(z) &= \v(f) \, + \sum^{|\E|}_{k = 1 \, , \, \alpha_k \geq 0} \alpha_k \v(h_k) \, - \sum^{|\E|}_{l = 1 \, , \, \alpha_l < 0} | \alpha_l | \v(h_l)  \\
	&= \v(f) \, + \sum^{|\E|}_{k = 1 \, , \, \alpha_k \geq 0} \alpha_k \v(h_k) \, + \sum^{|\E|}_{l = 1 \, , \, \alpha_l < 0} \alpha_l \v(h_l) \\
	&= \v(f) + \sum^{|\E|}_{k = 1} \alpha_k \v(h_k) = g^2 .
	\end{align*}

 Therefore, $z$ is a simplicial isometry of $\X$ into $\Rpq$ where $p + q = 2n + 1 + |\E|$ and $p \geq 2n + 1$. $z$ is an embedding since $f$ is.

\end{proof}

\vskip 10pt

\section{Proof of Theorem \ref{main thm 3}}\label{section 5}

For this section let $(\X, \T, g)$ be an $n$-dimensional indefinite metric polyhedron with vertex set $\V$ and edge set $\E$, assume that $d = \text{max}\{ \text{deg}(v) | v \in \V \} < \infty$, and let $q = \text{max} \{ 2n + 1, d \}$.  For a vertex $v$ the closed star of $v$ will be denoted by $St(v)$.  We define $St^2(v) := \bigcup_{u \in St(v)} St(u)$ and for any $k \in \mathbb{N}$ we define $St^{k + 1}(v) := \bigcup_{u \in St^k(v)} St(u)$.

An outline of the proof is as follows.  The first step is to construct, for each $v \in \V$, a compact indefinite metric polyhedron denoted by $(\S_v, \T_v, \g_v)$.  Then the vertex set $\V$ is partitioned into $D = d^3 - d^2 + d + 1$ classes $\{ \mathcal{C}_i \}_{i = 1}^{D}$ which satisfy that if $u, v \in \mathcal{C}_i$ then\footnote{One easily sees that this condition is symmetric} $u \nin St^3(v)$.  Note that this is equivalent to the statement $int(St^2(u)) \cap int(St^2(v)) = \emptyset$ where $int()$ means ``interior".  Now, for each $v$ in a fixed class $\mathcal{C}_i$, a simplicial isometric embedding $\a_v: \S_v \rightarrow \R^{2q, 2q}$ is constructed which satisfies that if $u, v \in \mathcal{C}_i$ then $\a_u (\S_u) \cap \a_v (\S_v) = \vec{0}$.  This allows for the construction of a simplicial map $\b_i: \X \rightarrow \R^{2q,2q}$ (for each $1 \leq i \leq D$).  Then the simplicial isometric embedding is 
	\begin{equation*}
	\l := \bigoplus_{i = 1}^{D} \b_i : \X \longrightarrow \R^{p,p}
	\end{equation*}
where $p = 2qD = 2q(d^3 - d^2 + d + 1)$.

Before proceeding to the proof of Theorem \ref{main thm 3} it should be noted that, in exactly the same way as for Theorem \ref{main thm 1}, there are the following two Corollaries to Theorem \ref{main thm 3}.
	
\begin{cor}\label{corollary 5.1}
Let $(\X, \T, g)$ be an $n$-dimensional indefinite metric polyhedron with vertex set $\V$ and suppose that $d = \text{max} \{ \text{deg}(v) | v \in \V \} < \infty$. Let $\ds{q = \text{max} \{ d, \, 2n \} }$.  Then there exists a simplicial isometric local embedding of $\X$ into $\ds{\R^{p,p}}$ where $p = q(d^3 - d^2 + d + 1)$.
\end{cor}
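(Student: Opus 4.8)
The plan is to re-run the proof of Theorem \ref{main thm 3} almost verbatim, making only the modifications that convert a global isometric embedding into a local one — precisely the modifications that produced the Corollary following Theorem \ref{main thm 1} out of its proof. There the single change was to use Corollary \ref{corollary:important} (which needs only $(2n)$-general position and produces a map that is an embedding on the closed star of every vertex) in place of Lemma \ref{lemma:important} (which needs $(2n+1)$-general position and produces a genuine embedding), and the same substitution is made everywhere in the proof of Theorem \ref{main thm 3}.

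First I would revisit the gadgets $(\S_v, \T_v, \g_v)$ and the maps $\a_v$. Each $\a_v$ is built, exactly as in Theorem \ref{main thm 1}, by concatenating a free Euclidean embedding of $\S_v$ with itself (so as to induce the zero metric) and then perturbing via the Inverse Function Theorem, and the hypothesis $q = \max\{d, 2n+1\}$ enters only to make that Euclidean map a global embedding. With $q = \max\{d, 2n\}$ instead, a generic simplicial map of $\S_v$ still has its vertices in $d$-general position — so the Inverse Function Theorem applies by Lemma \ref{lemma surjective differential} — and in $(2n)$-general position — so by Corollary \ref{corollary:important} it restricts to an embedding on the closed star of every vertex; both conditions hold simultaneously for a generic map into $\R^{q}$ (cf.\ Lemma \ref{lemma almost all are free}). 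Running the perturbation argument of Theorem \ref{main thm 1} on this weakened input yields a simplicial isometric local embedding $\a_v \colon \S_v \to \R^{q}_{q}$ with $q = \max\{d, 2n\}$, and the arrangement $\a_u(\S_u) \cap \a_v(\S_v) = \vec 0$ for $u, v$ in a common class is produced exactly as before, since translations (and more general Minkowski isometries) do not alter induced metrics.

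Next I would assemble the maps. The only place in the proof of Theorem \ref{main thm 3} that spends more than one copy of $\R^{q}_{q}$ per class is the step whose sole purpose is to make the assembled map $\l$ globally injective, rather than merely an immersion or local embedding; for the present conclusion that copy may simply be deleted. Indeed, every vertex $w \in \V$ lies in $St^2(w)$ and belongs to some class $\mathcal{C}_i$, so on $St(w)$ the map $\b_i$ restricts, up to a Minkowski isometry, to $\a_w|_{St(w)}$, which is an embedding; hence $\l|_{St(w)}$ — having $\b_i|_{St(w)}$ as one of its coordinate blocks — is an embedding for every $w$, i.e.\ $\l$ is a simplicial isometric local embedding. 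What remains is $\b_i \colon \X \to \R^{q}_{q}$ for each $1 \le i \le D$, where $D = d^3 - d^2 + d + 1$, and $\l = \bigoplus_{i=1}^{D} \b_i \colon \X \to \R^{p}_{p}$ with $p = qD = q(d^3 - d^2 + d + 1)$; the identity $\sum_{i=1}^{D} \v(\b_i) = g^2$ is the same computation as in Theorem \ref{main thm 3}, since the discarded coordinates carried the zero metric.

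The main obstacle is bookkeeping rather than a new idea: one must isolate, inside the proof of Theorem \ref{main thm 3}, exactly which of the $2q$ coordinates attached to each class are used solely to upgrade a local embedding to a global one, and then verify that deleting them breaks neither the disjointness of the sets $\a_v(\S_v)$ within a class nor the metric identity $\sum_i \v(\b_i) = g^2$. Once that accounting is done, every remaining step of the proof of Theorem \ref{main thm 3} goes through unchanged with $q = \max\{d, 2n\}$ in place of $q = \max\{d, 2n+1\}$, which gives the stated bound $p = q(d^3 - d^2 + d + 1)$.
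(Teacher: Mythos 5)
Your proposal matches the paper's own treatment: the paper obtains Corollary \ref{corollary 5.1} by re-running the proof of Theorem \ref{main thm 3} with the local-embedding version of Theorem \ref{main thm 1} (so $q = \max\{d, 2n\}$) applied to each gadget $\S_v$, and by deleting the maps $\iota_v$, which the paper explicitly notes exist only to force global injectivity and whose removal accounts for the missing factor of $2$ --- exactly your two modifications. One small correction: without $\iota_v$ the disjointness $\a_u(\S_u) \cap \a_v(\S_v) = \vec{0}$ is in general no longer guaranteed, but as your own argument shows it is also no longer needed, since well-definedness of $\b_i$ uses only the disjointness of the closed stars of vertices within a class, and the local-embedding property uses only that the coordinate block $\b_i|_{St(w)}$ is injective.
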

	
\begin{cor}\label{corollary 5.2}
Let $(\X, \T, g)$ be an $n$-dimensional indefinite metric polyhedron with vertex set $\V$ and suppose that $d = \text{max} \{ \text{deg}(v) | v \in \V \} < \infty$. Then there exists an isometric immersion of $\X$ into $\ds{\R^{p,p}}$ where $p = d(d^3 - d^2 + d + 1)$.
\end{cor}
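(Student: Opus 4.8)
The plan is to reuse the architecture of the proof of Theorem~\ref{main thm 3} with no change, making only the adjustments that replace each ``embedding'' by an ``immersion'' --- precisely the adjustments that turn Theorem~\ref{main thm 1} into the immersion statement of the Corollary immediately following its proof. Recall that in that argument one attaches to each vertex $v \in \V$ a compact indefinite metric polyhedron $(\S_v, \T_v, \g_v)$ of dimension at most $n$ and maximal vertex degree at most $d$ (it is exactly this degree bound that makes $q = \text{max}\{d,\,2n+1\}$, rather than a larger quantity, the factor appearing in Theorem~\ref{main thm 3}); one then partitions $\V$ into $D = d^3 - d^2 + d + 1$ classes $\{\mathcal{C}_i\}_{i=1}^{D}$ so that $u,v \in \mathcal{C}_i$ forces $u \nin St^3(v)$, chooses for each $v$ a simplicial isometric embedding $\a_v \colon \S_v \to \R^{2q}_{2q}$ with $\a_u(\S_u) \cap \a_v(\S_v) = \vec{0}$ whenever $u,v$ lie in a common class, glues the $\a_v$ of a common class into a simplicial map $\b_i \colon \X \to \R^{2q}_{2q}$, and finally forms $\l = \bigoplus_{i=1}^{D} \b_i$.

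I would change exactly two things. First, in place of the isometric embedding $\a_v \colon \S_v \to \R^{2q}_{2q}$ I would take the isometric immersion $\a_v \colon \S_v \to \R^{d}_{d}$ supplied by the immersion half of the Corollary following the proof of Theorem~\ref{main thm 1}; this is available because $\S_v$ is compact with maximal degree at most $d$. Second, I would drop the disjointness condition $\a_u(\S_u) \cap \a_v(\S_v) = \vec{0}$ altogether, together with the coordinate doubling that was responsible for passing from $\R^{q}_{q}$ to $\R^{2q}_{2q}$: that condition was used only to make the assembled maps $\b_i$, and hence $\l$, globally injective, whereas here we only require that $\l$ be an immersion, i.e.\ injective on each closed simplex of $\T$. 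With these changes each $\b_i$ maps into $\R^{d}_{d}$, so $\l \colon \X \to \R^{dD}_{dD} = \R^{p}_{p}$ with $p = dD = d(d^3 - d^2 + d + 1)$; and the verification that $\v(\l) = g^2$ is word for word the one in the proof of Theorem~\ref{main thm 3}, since that computation never used injectivity of the $\a_v$.

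The only genuinely new point to check is that $\l$ really is an immersion. Given a simplex $\s \in \T$, it lies in $St(v)$ for some vertex $v$, and $v$ lies in a unique class $\mathcal{C}_i$; on $St(v)$ the map $\b_i$ agrees, in the $i$-th $\R^{d}_{d}$ block of coordinates, with $\a_v$, and since $\a_v$ is an isometric immersion, $\a_v|_\s$ --- and therefore $\b_i|_\s$, hence $\l|_\s$ --- is an embedding; finiteness of $D$ makes this uniform over all simplices. Everything else (the existence of the $D$-colouring, the construction of the $(\S_v, \T_v, \g_v)$, and the fact that the $St^3$-separation lets the $\a_v$ in a common class be glued simplicially) is borrowed verbatim from Section~\ref{section 5}. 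The main obstacle, such as it is, is simply confirming that these borrowed ingredients are insensitive to replacing ``embedding'' by ``immersion'' and to deleting the coordinate doubling; I expect that they are, but that is where care is needed. The analogous local-embedding statement, Corollary~\ref{corollary 5.1}, would be obtained the same way, using instead the local-embedding half of the Corollary to Theorem~\ref{main thm 1} (target $\R^{q'}_{q'}$ with $q' = \text{max}\{d,\,2n\}$) and keeping whatever separation between the $\a_v$ is needed to make $\l$ injective on each closed star.
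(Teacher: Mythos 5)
Your proposal is correct and is exactly the route the paper intends: it runs the Theorem~\ref{main thm 3} construction with the immersion half of the Corollary to Theorem~\ref{main thm 1} (target $\R^d_d$ for each $\S_v$, normalized so $\a_v(v^*)=\vec{0}$) in place of the embeddings $h_v$, and drops the doubling maps $\iota_v$, which the paper itself notes are only needed for global injectivity. The isometry computation and the simplex-by-simplex injectivity check are as you describe.
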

	
In comparing the above two Corollaries to Theorem \ref{main thm 3} we see that there is a ``2" missing from the dimensional requirements.  During the proof of Theorem \ref{main thm 3} we will indicate where this difference comes from.

\subsection{Construction of the compact indefinite metric polyhedron $(\mathcal{S}_v, \T_v, \g_v)$}

Let $v \in \V$.  The Polyhedron $\S_v$ will look like the cone of $St(v)$, and in fact that would work.  But in an attempt to keep the dimension of the embedding space as small as possible the construction is altered some as follows.

The polyhedron $\S_v$ and the triangulation $\T_v$ are constructed at the same time.  Begin the construction of $\S_v$ with the entire complex $St(v)$.  Then adjoin a vertex denoted by $v^*$ as follows.  Glue in an edge between $v^*$ and a vertex $u$ on the boundary of $St(v)$ if and only if there exists an edge in $\T$ which is adjacent to $u$ and \emph{not} contained in $St(v)$. Do \emph{not} connect an edge between $v$ and $v^*$.  Then for any $2 \leq k \leq n$, if there exist $k$ vertices on the boundary of $St(v)$ which are contained in the boundary of a $k$ simplex in $\X \setminus int(St(v))$, glue in a $k$-dimensional simplex using those $k$ vertices and $v^*$ (see Figure 1).

This completes the construction of the polyhedron $(\S_v, \T_v)$.  It is clear that $\S_v$ is compact.  It is important to note that $\S_v$ has dimension less than or equal to $n$ and the maximal degree of any vertex is less than or equal to $d$, so it meets the criteria of Theorem \ref{main thm 1}.  What is left to do is to describe the indefinite metric $\g_v$.

Let $e$ be an edge of $\S_v$.  Either $e$ is adjacent to $v$ or $e$ is not adjacent to $v$.  In the latter case, simply define $\g_v(e) := 0$.  So, in particular, notice that every edge adjacent to $v^*$ has intrinsic length $0$.  In the former case, when $St(v)$ is considered as a subcomplex of $\X$, the edge $e$ has an intrinsic length $g(e)$.  Define $\g_v(e) := \frac{1}{\sqrt{2}} g(e)$ (see Figure \ref{firstfig}).

\subsection{Partitioning $\V$ into $D = d^3 - d^2 + d + 1$ classes and isometric embeddings of $\mathcal{S}_v$ into $\R^{2q,2q}$}

The first goal here is to partition $\V$ into $D = d^3 - d^2 + d + 1$ classes $\{ \mathcal{C}_i \}_{i = 1}^{D}$ which satisfy that, if $u, v \in \mathcal{C}_i$, then $u \nin St^3(v)$.  Since $\T$ is locally finite, $\V$ is countable.  So enumerate $\V$ in some way.  The class in which a given vertex resides in is defined recursively.  Put the first vertex in $\mathcal{C}_1$.  Then assume that the classes of all of the vertices before a given vertex $v$ have been determined.  This vertex $v$ is connected by an edge to at most $d$ other vertices of $\T$.  Each of these vertices is connected by an edge to at most $d - 1$ vertices of $\T$ other than $v$ (the $d - 1$ is since they are all connected to $v$).  Thus, $\partial St^2(v)$ contains at most $d(d - 1)$ vertices and $St^2(v)$ contains at most $d(d - 1) + d = d^2$ vertices other than $v$.  But, by the same logic as before, each vertex in the boundary of $St^2(v)$ is connected by an edge to at most $d - 1$ vertices not in $St^2(v)$.  So $St^3(v)$ contains at most $d(d - 1)^2 + d^2 = d^3 - d^2 + d$ vertices other than $v$.  Then since there are $d^3 - d^2 + d + 1$ classes, there always exists a class $\mathcal{C}_i$ such that $St^3(v)$ does not contain any of the vertices already in $\mathcal{C}_i$.  Place $v$ into such a class, completing the definition of the classes $\{ \mathcal{C}_i \}_{i = 1}^{D}$.

For the following discussion let us fix a class $\mathcal{C}_i$.  By Theorem \ref{main thm 1}, for each $v \in \mathcal{C}_i$ there exists a simplicial isometric embedding $h_v: \S_v \rightarrow \R^{q,q}$.  By composing with a translation it may be assumed that $h_v(v^*) = \vec{0}$.  In what follows we construct, for each $v \in \mathcal{C}_i$, a (linear) isometric embedding $\iota_v: \R^{q,q} \rightarrow \R^{2q,2q}$ (see Figure \ref{secondfig}).  Then $\a_v := \iota_v \circ h_v$ will be the desired simplicial isometric embedding of $\S_v$ into $\R^{2q,2q}$.

\begin{figure}[tb]
\begin{center}
\begin{tikzpicture}[scale=0.8]

\draw (3,2.5)node{$\X$ ($St(v)$ bolded, gray denotes 2-simplex)};
\draw[fill=gray!40] (0,2) -- (3,2) -- (2,1) -- (0,2);
\draw[fill=gray!40] (3,2) -- (4,0) -- (2,1) -- (3,2);
\draw (4,0) -- (2,-1);
\draw[fill=gray!40] (0,0)node[left]{$v$} -- (0,2) -- (2,1) -- (0,0);
\draw[line width=0.5mm] (0,0) -- (0,2) -- (2,1);
\draw (2,1)-- (0,0);
\draw[line width=0.5mm] (2,-1)-- (2,1);
\draw[fill=gray!40] (0,0) -- (2,-1) -- (0,-2)-- (0,0);
\draw (0,0) -- (2,-1);
\draw[line width=0.5mm] (2,-1) -- (0,-2)-- (0,0);
\draw (2,-1) -- (4,-1);
\draw (4,0) -- (4,-1);
\draw (0,1)node[left]{2};
\draw (0,-1)node[left]{3};
\draw (1,-1.5)node[below]{-3};
\draw (0.75,-0.5)node[below]{$\sqrt{2}$};
\draw (1,0.5)node[below]{1};
\draw (1.5,2)node[below]{-9};
\draw (1,1.5)node[below]{-1};
\draw (2.5,1.5)node[right]{0};
\draw (3.5,1)node[right]{-4};
\draw (2,0)node[right]{11};
\draw (3,0.5)node[above]{-1};
\draw (3,-0.5)node[above]{7};
\draw (3,-1)node[below]{-1};
\draw (4,-0.5)node[right]{100};

\draw (5,0) -- (7,0);
\draw (6.9,0.1) -- (7,0);
\draw (6.9,-0.1) -- (7,0);

\draw (11, 2.5)node{$S_v$};
\draw[fill=gray!40] (8,0)node[left]{$v$} -- (8,2) -- (10,1) -- (8,0);
\draw[fill=gray!40] (8,2) -- (14,0)node[right]{$v^*$} -- (10,1) -- (8,2);
\draw[fill=gray!40] (8,0) -- (10,-1) -- (8,-2) -- (8,0);
\draw[line width=0.5mm] (10,1) -- (10,-1);
\draw (10,-1) -- (14,0);
\draw[line width=0.5mm] (8,0) -- (8,2) -- (10,1);
\draw[line width=0.5mm] (10,-1) -- (8,-2) -- (8,0);
\draw (8,1)node[left]{$\sqrt{2}$};
\draw (8,-1)node[left]{$\frac{3}{\sqrt{2}}$};
\draw (9,0.5)node[below]{$\frac{1}{\sqrt{2}}$};
\draw (9,-0.5)node[below]{1};
\draw (11,1)node[above]{0};
\draw (9,1.5)node[below]{0};
\draw (10,0)node[right]{0};
\draw (12,0.5)node[below]{0};
\draw (12,-0.5)node[below]{0};
\draw (9,-1.5)node[below]{0};

\end{tikzpicture}
\end{center}
\caption{The assignments of $g$ and $\g_v$, respectively, are denoted along each edge.}
\label{firstfig}
\end{figure}
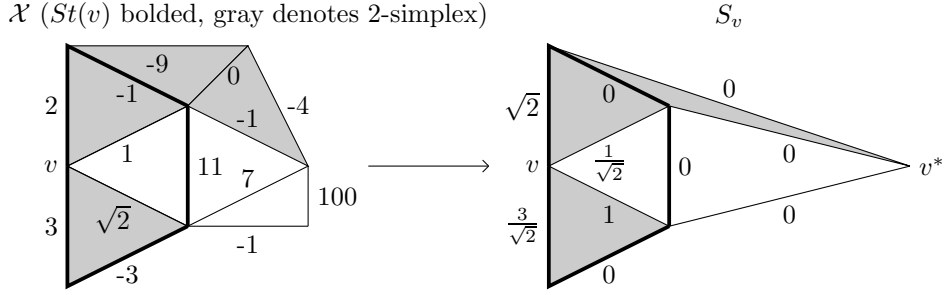

Since $\V$ is countable, $\mathcal{C}_i$ is countable.  So there exists an injection $\mu:\mathcal{C}_i \rightarrow \mathbb{N}$.  Thinking of $\R^{2q,2q}$ as $\R^{q,q} \times \R^{q,q}$, define $\iota_v: \R^{q,q} \rightarrow \R^{2q,2q}$ by 
	\begin{equation*}
	\iota_v(\vec{x}) = \left( \sqrt{\frac{1}{\mu(v)}} \, \vec{x} , \sqrt{1 - \frac{1}{\mu(v)}} \, \vec{x} \right). 
	\end{equation*}
$\iota_v$ is a linear isometry, as desired.  Note that since $h_v(v^*) = \vec{0}$, $\a_v(v^*) = \iota_v(\vec{0}) = \vec{0}$.  Also notice that for $v, w \in \mathcal{C}_i$, $\a_v(\S_v) \cap \a_w(S_w) = \vec{0}$.

It should be noted that the purpose of the map $\iota_v$ is to ensure that this construction leads to an embedding.  It is not necessary if the only requirement is that the proof leads to either a local embedding or an immersion.  This is why Corollaries \ref{corollary 5.1} and \ref{corollary 5.2} do not require the ``2" that is present in Theorem \ref{main thm 3}.

\subsection{Wrapping up the construction}

What is left of the proof is to use the $\a_v's$ of a class $\mathcal{C}_i$ to construct the simplicial map $\b_i:\X \rightarrow \R^{2q,2q}$, and then showing that $\l := \bigoplus_{i = 1}^{D} \b_i : \X \rightarrow \R^{p,p}$ where $p = 2q(d^3 - d^2 + d + 1)$ is an isometric embedding.  

Since $\b_i$ is to be simplicial, it need only be defined on the vertices of $\T$.  So let $u \in \V$ be arbitrary.  If $u \in St(v)$ for some $v \in \mathcal{C}_i$ then define $\b_i(u) := \a_v(u)$ (where, for $\a_v(u)$ to make sense, $St(v)$ is considered as a subcomplex of $\S_v$).  Otherwise, $u \nin St(v)$ for all vertices $v \in \mathcal{C}_i$.  In this case, define $\b_i(u) := \vec{0}$.  This is a well-defined construction since the closed stars of vertices in $\mathcal{C}_i$ are disjoint.

\subsection*{Showing $\l$ is an isometry}

In order to show that $\l$ is an isometry, $\v(\b_i)(e)$ needs to be analyzed (for each $i$ and on each edge $e \in \E$).  

So let $e \in \E$ be arbitrary, and let $u$ and $v$ denote the vertices adjacent to $e$, respectively.  This is broken down into four cases.  The first and most important case is when one of the vertices $u$ or $v$ is in $\mathcal{C}_i$.  Without loss of generality assume that it is $v$.  Then $u, v \in St(v)$ and thus $\b_i(v) = \a_v(v)$ and $\b_i(u) = \a_v(u)$.  So $\v(\b_i)(e) = \v(\a_v)(e) = s(\g_v(e)) = \frac{1}{2} s(g(e)) = \frac{1}{2} g^2(e)$.  

For the last three cases, assume that neither $u$ nor $v$ is in $\mathcal{C}_i$.  Case 2 is when there exists $w \in \mathcal{C}_i$ such that both $u, v \in St(w)$, or equivalently $e \subseteq \partial St(w)$.  This case is analogous to the above but this time, due to the definition of $\g_w$, $\v(\b_i)(e) = s(\g_w(e)) = 0$.  For Case 3, assume that there exists $w \in \mathcal{C}_i$ such that exactly one of $u$ or $v$ is in $St(w)$, say $u \in St(w)$.  It is important to note that there cannot exist $x \in \mathcal{C}_i$ such that $v \in St(x)$, for otherwise it would occur that $x \in St^3(w)$ which violates the construction of the class $\mathcal{C}_i$.  So here it is seen that $\b_i(u) = \a_w(u)$ and $\b_i(v) = \vec{0} = \a_w(w^*)$.  Therefore $\v(\b_i)(e) = \v(\a_w)(e)$\footnote{where we consider $e$ as the edge between $u$ and $w*$ in $\S_w$}$ = 0$.  The last case is when neither $u$ nor $v$ is in the closed star of any member of $\mathcal{C}_i$.  But in this case both vertices are mapped to $\vec{0}$ and hence $\v(\b_i)(e) = 0$.  

The key point to note here is that the only edges $e \in \E$ for which $\v(\b_i)(e) \neq 0$ are those which are adjacent to a member of $\mathcal{C}_i$.  And in this case $\v(\b_i)(e) = \frac{1}{2} g^2(e)$.  But since each edge is adjacent to exactly two vertices (both of which are in different classes), and since $\v$ is additive with respect to concatenation of maps, for every edge $e \in \E$ it is the case that $\v(\l)(e) = \sum_{i = 1}^{D} \v(\b_i)(e) = \frac{1}{2} g^2(e) + \frac{1}{2} g^2(e) = g^2(e)$.  Hence $\l$ is an isometry.

\begin{figure}
\begin{center}
\begin{tikzpicture}[scale=0.8]

\draw (-2.2,0) -- (2.2,0);
\draw (2.1, 0.1) -- (2.2,0);
\draw (2.1, -0.1) -- (2.2,0);
\draw (-2.1, 0.1) -- (-2.2,0);
\draw (-2.1, -0.1) -- (-2.2,0);
\draw (2.2,0)node[above]{$\R^q_q$};

\draw (3.5,0) -- (5,0);
\draw (4.9,0.1) -- (5,0);
\draw (4.9,-0.1) -- (5,0);
\draw (4.25,0)node[above]{$\iota_v$};

\draw[line width=0.5mm] (5.5,0) -- (10.5,0);
\draw (8,-2.5) -- (8,2.5);
\draw[line width=0.5mm] (6.23223,-1.76777) -- (9.76777,1.76777);
\draw[line width=0.5mm] (6.55662,-2.04124) -- (9.44338,2.04124);
\draw (10.5,0)node[above]{when $\mu(v) = 1$};
\draw (9.76777,1.76777)node[right]{when $\mu(v) = 2$};
\draw (9.44338,2.3)node[right]{when $\mu(v) = 3$};
\draw (9.25,2.16506)node{.};
\draw (9.11803,2.23607)node{.};
\draw (9.02062,2.28218)node{.};
\draw (8.94491,2.31455)node{.};
\draw (10.5,0)node[below]{$\R^q_q$};
\draw (8,2.5)node[left]{$\R^q_q$};

\end{tikzpicture}
\end{center}
\caption{The images of $\iota_v$ for various values of $\mu(v)$.}
\label{secondfig}
\end{figure}
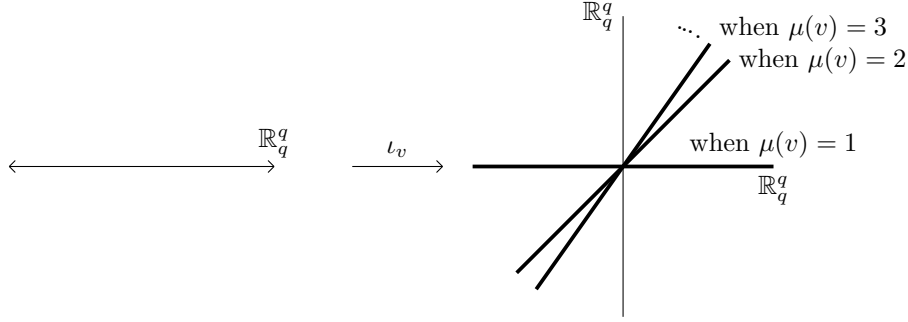

\subsection*{Showing $\l$ is an embedding}

Let $x, y \in \X$ with $x \neq y$.  Let $v \in \V$ be such that $x \in int(St(v))$ and let $i$ be the index such that $v \in \mathcal{C}_i$.  Note that since $x$ is in the interior of $St(v)$, $\b_i(x) = \a_v(x) \neq \vec{0}$.  What will be shown is that $\b_i(x) \neq \b_i(y)$, and therefore $\l(x) \neq \l(y)$.  

Clearly, if $y \in St(v)$, then $\a_v(x) \neq \a_v(y) \Longrightarrow \b_i(x) \neq \b_i(y) \Longrightarrow \l(x) \neq \l(y)$.  So suppose $y \nin St(v)$.  If $y \in St^2(w)$ for any $w \in \mathcal{C}_i$ with $w \neq v$, then $\b_i(y) = \a_w(y)$.  But $\a_v(\S_v) \cap \a_w(\S_w) = \vec{0}$ and $\b_i(x) \neq \vec{0}$, and thus $\b_i(x) \neq \b_i(y)$.  If $y \nin St^2(w)$ for any $w \in \mathcal{C}_i$ (including $v$) then $\b_i(y) = \vec{0}$ and therefore $\b_i(y) \neq \b_i(x)$.

So the only case left is when $y \in St^2(v) \setminus St(v)$.  Define a simplicial map\footnote{The purpose for gluing extra simplices onto $St(v)$ in the construction of $\S_v$ was so that we can extend this map simplicially over all of $\X$} $\pi_v: \X \rightarrow \S_v$ by mapping each vertex of $St(v)$ to itself and mapping every other vertex of $\T$ to $v^*$.  Note that $\pi_v$ maps all of $\X \setminus St^2(v)$ to $v^*$.  So for all $v \in \V$ there is the following sequence of maps:

\begin{center}
\begin{tikzpicture}
\matrix(m)[matrix of math nodes,
row sep=2.6em, column sep=4em,
text height=1.5ex, text depth=0.25ex]
{\X & \S_v & \R^{q,q} & \R^{2q,2q} \\
};
\path[->,font=\scriptsize,>=angle 90]
(m-1-1) edge node[above] {$\pi_v$} (m-1-2)
(m-1-2) edge node[above] {$h_v$} (m-1-3)
(m-1-3) edge node[above] {$\iota_v$} (m-1-4);
\end{tikzpicture}
\end{center}

\n When restricted to $St^2(v)$ it is easy to see that $\b_i = \iota_v \circ h_v \circ \pi_v$.  But since $y \in St^2(v) \setminus St(v)$, $\pi_v(x) \neq \pi_v(y)$.  Then, since $h_v$ and $\iota_v$ are embeddings, $\b_i(x) \neq \b_i(y)$.  Hence $\l$ is an embedding and therefore the proof of Theorem \ref{main thm 3} is complete.

\vskip 10pt

\section{Equivalence of the definitions of an indefinite metric polyhedron}\label{section 7}

\subsection{Assignment of the quadratic form}

Let $x \in \X$ be a point.  Then there is a unique $k$-dimensional simplex $\s_x = \langle v_0, v_1, ..., v_k \rangle \in \T$ such that $x$ is interior to $\s_x$.  So one can consider a $k$-dimensional tangent space at $x$, denoted by $T_x \X$, whose dimension certainly depends on the triangulation $\T$.  Under the simplicial isometric embeddings produced in sections 3, 4 and 5, $T_x \X$ can be considered as a $k$-dimensional affine subspace of $\Rpq \cong \R^N$ (where $N = p + q$).  Consider the collection of vectors 
	\begin{equation*}
	\mathcal{B}_x = \{ v_1 - v_0, v_2 - v_0, ..., v_k - v_0 \} 
	\end{equation*}
where the difference makes sense since the complex is being considered as a subspace of $\R^N$.  Clearly $\Bx$ is a basis for $T_x$.  So associate to $\s_x$ the $k \times k$ symmetric matrix $G(\s_x)$ defined by:
	\begin{equation*}
	G(\s_x)_{ij} = (\langle (v_i - v_0) , (v_j - v_0) \rangle )_{ij} 
	\end{equation*}
where the inner product is take in $\Rpq$.  $G(\s_x)$ is simply the \emph{Gram matrix} associated with $\mathcal{B}_x$.  At first glance it seems that this definition might depend on the isometric embedding of $\s_x$.  What is done now is to (quickly) show that this is not true.  The key is that:
	\begin{align*}
	g^2(\eij) &= \langle (v_i - v_j) , (v_i - v_j) \rangle   \\
	&= \langle ((v_i - v_0) - (v_j - v_0)) , ((v_i - v_0) - (v_j - v_0)) \rangle   \\
	&= g^2(e_{0i}) -2 \langle (v_i - v_0) , (v_j - v_0) \rangle + g^2(e_{0j}) 
	\end{align*}
and so 
	\begin{equation*}
	\langle (v_i - v_0) , (v_j - v_0) \rangle \, = \frac{1}{2} \left( g^2(e_{0i}) + g^2(e_{0j}) - g^2(\eij) \right) 
	\end{equation*}
\n where the edge notation is the same as always.

This shows that the matrix $G(\s_x)$ depends only on the intrinsic (indefinite) metric $g$.  Of course, $G(\s_x)$ also depends on the ordering of the vertices of $\s_x$.  But changing the order of the vertices of $\s_x$ just changes the coordinates of $\Bx$.  Thus, $G(\s_x)$ is well-defined when considered as a symmetric bilinear form on $T_x \X$ (or equivalently on the tangent space to any point interior to $\s_x$).

In this way one can associate a symmetric bilinear form to every simplex of $\T$.  This form allows us to assign an \emph{energy} to any straignt line segment interior to any closed simplex.  For if $\s \in \T$ is a $k$-dimensional simplex and $a, b \in \s$ with barycentric coordinates $(\a_i)_{i = 0}^{k}$ and $(\b_i)_{i = 0}^{k}$ respectively, then the energy of the straight line segment (in $\s$) from $a$ to $b$ is 
	\begin{equation*}
	v^T G(\s) v 
	\end{equation*}
\n where $v \in \R^k$ is defined as
	\begin{equation*}
	v = (\a_i - \b_i)_{i = 1}^{k}. 
	\end{equation*}

It is easy to see that the energy of a line segment is well-defined at the intersection of any collection of simplices.  It is also easy to see that the energy assigned to any edge $\eij$ under this definition is $g^2(\eij)$.  Thus, the collection of indefinite metrics on $(\X, \T)$ is in one-to-one correspondence with assignments of a symmetric bilinear form to each simplex of $\T$ that agree (meaning they assign the same energy to any line segment) on the intersection of any two simplices.

\subsection{Euclidean and Minkowski polyhedra}

Let $(\X, \T, g)$ be an indefinite metric polyhedron and let $G$ be the symmetric bilinear form defined as above with respect to $g$.  $\X$ is a \emph{Euclidean Polyhedron} if $G(\s)$ is positive definite for all $\s \in \T$.  $\X$ is a \emph{Minkowski Polyhedron} if $G(\s)$ is non-degenerate for all $\s \in \T$

A $k$-dimensional simplex $\s \in \T$ admits a simplicial isometric embedding into Euclidean space of dimension $k$ if and only if $G(\s)$ is positive definite.  For a proof see \cite{Bhatia}.  If a $k$-dimensional simplex admits a simplicial isometric embedding into $\Rpq$ with $p + q = k$, then the signature of $G(\s)$ will not contain any zeroes since the inner product on $\Rpq$ is non-degenerate.  This justifies the above definition.

Note that for a general indefinite metric polyhedron, the quadratic form $G(\s)$ can have zeroes in its signature.  Theorems \ref{main thm 1}, \ref{main thm 2} and \ref{main thm 3} do not contradict the above statement since $p + q > n$ in these Theorems.

\section{An example verifying the sharpness of the dimension requirements for Theorem \ref{main thm 1} and the assumptions in Theorem \ref{main thm 3}}\label{section 6}

First we consider the case when $n = 1$, and then this case will be used to illustrate an example which works for arbitrary $n$.  If the dimension requirements for Theorem \ref{main thm 1} did \emph{not} depend on $d := \text{max} \{ \text{deg}(v) | v \in \V \}$, then the Theorem would state that every 1-dimensional compact indefinite metric polyhedron admits a simplicial isometric embedding into $\R^{3,3}$.  Let $(\X, \T, g)$ be the 1-skeleton of a 4-simplex and define $g(e) = 1$ for every edge $e \in \T$.  Clearly $\X$ admits a simplicial isometric embedding into $\mathbb{E}^4$, but not into $\mathbb{E}^3$.  The claim is that $\X$ \emph{does not} admit a simplicial isometric embedding into $\R^{3,3}$.  To see this, just note that the Gram matrix associated to any such embedding will have signature $(4,0)$.  So if such a simplicial isometric embedding existed, then the restriction of the global inner product in $\R^{3,3}$ to any subspace containing the image of $\X$ must have at least 4 positive eigenvalues.  But any such restriction can have no more than 3 positive eigenvalues.  The same example, but with edge lengths of -1, shows that $d$ is necessary in the negative signature as well.

This same construction, but using the $n$-skeleton of an $N$-simplex with $N \geq 2n + 2$, works for arbitrary $n$.  It will be seen next section that this example\footnote{Actually, this fact holds for all $n$-dimensional indefinite metric polyhedra, not just this example.} \emph{does} admit a pl isometric embedding into $\R^{2n,n} \subseteq \R^{2n+1, 2n+1}$, so the necessity of $d$ in the dimensionality of the target Minkowski space vanishes when one only requires that the isometric embedding be pl instead of simplicial!  

With this example in mind it is easy to see that the assumption $d < \infty$ is necessary in Theorem \ref{main thm 3}.  Let $(\X_N, \T_N, g_N)$, $N \geq 2n + 2$, denote the above example.  Then construct an indefinite metric polyhedron $(\X, \T, g)$ by gluing together all of the previously mentioned polyhedra at a common vertex.  Since the dimensionality of the target Minkowski space must increase without bound as $N \rightarrow \infty$, $(\X, \T, g)$ does not admit a simplicial isometric embedding into $\Rpq$ for any $p, q$.

\end{document}